\documentclass[11pt,letterpaper]{article}
\usepackage{latexsym,color,amssymb,amsthm,fullpage,enumitem,
amsmath,amsfonts,bm}

\usepackage{graphicx}
\usepackage{euscript}

\DeclareFontFamily{OT1}{pzc}{}
\DeclareFontShape{OT1}{pzc}{m}{it}{<-> s * [1.2500] pzcmi7t}{}
\DeclareMathAlphabet{\mathscr}{OT1}{pzc}{m}{it}

\usepackage{graphicx}

\usepackage[left=1in, right=1in, top=1in, bottom=1in]{geometry}

\newcommand{\ignore}[1]{}

\newtheorem{theorem}{Theorem}[section]
\newtheorem{lemma}[theorem]{Lemma}

\newtheorem{proposition}[theorem]{Proposition}

\newtheorem{corollary}[theorem]{Corollary}

\def\eps{{\epsilon}}
\def\aff{{{\mathtt{aff}}}}
\def\conv{{{\mathtt{conv}}}}

\def\1{{(1)}}
\def\2{{(2)}}

\def\deg{{\mathtt{deg}}}

\def\A{{\cal A}}

\def \sign {{\sf sign}}

\def\I{{\cal I}}

\def\F{{\cal F}}

\def\HH{{\cal{H}}}
\def\C{{\cal C}}

\def\E{{{\cal{E}}}}

\def\K{{{\mathcal {K}}}}

\def\N{{\cal N}}
\def\W{{\cal W}}
\def\R{{\cal{R}}}

\def\reals{{\mathbb R}}

\def\G{{\cal G}}

\begin{document}

\begin{titlepage}

\title{Improved Bounds for Point Selections and Halving Hyperplanes in Higher Dimensions}

\author{
Natan Rubin\thanks{Email: {\tt rubinnat.ac@gmail.com}. Ben Gurion University of the Negev, Beer-Sheba, Israel. Supported
by grant 2891/21 from Israel Science Foundation. A preliminary version of this paper has appeared in the Proceedings of the 2024 Annual ACM-SIAM Symposium on Discrete Algorithms (SODA), pp. 4464--4501.
}}

\maketitle

\begin{abstract}Let $(P,E)$ be a $(d+1)$-uniform geometric hypergraph, where $P$ is an $n$-point set in general position in $\reals^d$ and $E\subseteq {P\choose d+1}$ is a collection of $\eps{n\choose d+1}$ $d$-dimensional simplices with vertices in $P$, for $0<\eps\leq 1$. We show that there is a point $x\in \reals^d$ that pierces

\smallskip
\begin{center} 
{\large{
$\Omega\left(\eps^{(d^4+d)(d+1)+\delta}{n\choose d+1}\right)
$}}
\end{center}
 
\noindent simplices in $E$, for any fixed $\delta>0$. This is a dramatic improvement in all dimensions $d\geq 3$, over the previous lower bounds of the general form $\displaystyle\eps^{(cd)^{d+1}}n^{d+1}$, which date back to the seminal 1991 work of Alon, B\'{a}r\'{a}ny, F\"{u}redi and Kleitman.

As a result, any $n$-point set in general position in $\reals^d$ admits only 
\smallskip
\begin{center} 
{\large{$
O\left(n^{d-\frac{1}{d(d-1)^4+d(d-1)}+\delta}\right)
$}}
\end{center}

\noindent halving hyperplanes, for any $\delta>0$, which is a significant improvement over the previously best known bound
$\displaystyle O\left(n^{d-\frac{1}{(2d)^{d}}}\right)$ in all dimensions $d\geq 5$.

An essential ingredient of our proof is the following semi-algebraic Tur\'an-type result of independent interest: Let $(V_1,\ldots,V_k,E)$ be a hypergraph of bounded semi-algebraic description complexity in $\reals^d$ that satisfies $|E|\geq \varepsilon |V_1|\cdot\ldots \cdot |V_k|$ for some $\varepsilon>0$. Then there exist subsets $W_i\subseteq V_i$ that satisfy $W_1\times W_2\times\ldots\times W_k\subseteq E$, and $|W_1|\cdot\ldots\cdots|W_k|=\Omega\left(\varepsilon^{d(k-1)+1}|V_1|\cdot |V_2|\cdot\ldots\cdot|V_k|\right)$. 




%

\end{abstract}

\maketitle


\end{titlepage}

\section{Introduction} \label{sec:intro}
\subsection {Point selections, and related problems in geometric hypergraphs} 
Let $d\geq 1$ be an integer. 
In this paper we study hypergraphs {\it in the Euclidean $d$-space $\reals^{d}$}, namely, the hypergraphs whose vertices are points in $\reals^d$. 

\medskip
\noindent {\bf Selection in geometric hypergraphs.} A {\it $(d+1)$-uniform simplicial hypergraph $\HH=(P,E)$ in $\reals^d$} is determined by a set $P$ of $n$ points in general position in $\reals^d$ \cite{GeneralPosition}, and a collection $E\subseteq {P\choose d+1}$ of $(d+1)$-size subsets of $P$, which are called the {\it edges} (or rather {\it hyperedges}) of $\HH$.\footnote{In what follows, for any set $A$, and any non-negative integer $k$, we use ${A\choose k}$ to denote the collection of all the possible $k$-size subsets $B\subseteq A$. The general position of $P$ means, in particular, that no $d+1$ of its points lie in the same hyperplane, so that any $(d+1)$-size subset $\tau\in {P\choose d+1}$ indeed yields a proper $d$-dimensional simplex  \cite{JirkaBook}.} The hyperedges $\tau\in E$ are identified with their convex hulls $\conv(\tau)$, which constitute $d$-dimensional {\it simplices} within $\reals^d$. We say that a point $x\in \reals^d$ {\it pierces} the edge $\tau\in E$ if it lies in the relative interior of the convex hull $\conv(\tau)$.

For any positive integer $d$ and $0<\eps\leq 1$, the selection problem concerns the value $F_d(n,\eps)$, which is the largest possible number $f$ with the following property: For any $(d+1)$-uniform simplicial hypergraph $\HH=(P,E)$ in $\reals^d$ with $|V|=n$ vertices and $|E|\geq \eps{|V|\choose d+1}$ hyperedges there exists a point $x\in \reals^d$ (not necessarily in $P$) piercing at least $F$ edges in $E$. The so called {\it First Selection Theorem} states that $F_d(n,1)=\Theta\left({n\choose d+1}\right)$ for any $d\geq 1$.

\begin{theorem}[\cite{BorosFuredi,Barany}]\label{Theorem:FirstSelection} For any $d\geq 1$ there is $c=c(d)>0$ with the following property:
	Let $P$ be a finite point set in general position in $\reals^d$. Then there is a point $x\in \reals^d$ piercing at least $c{n\choose d+1}-o\left(n^{d+1}\right)$ $d$-simplices in ${P\choose d+1}$.
\end{theorem}

The planar variant of Theorem \ref{Theorem:FirstSelection} dates back to Boros and F\"uredi \cite{BorosFuredi} who established the following property: {\it For every set $P$ of $n$ points in the plane, there is a point that belongs to at least $\frac{2}{9}{n\choose 3}-o\left(n^3\right)$ closed triangles induced by the elements of $P$.} This remarkable insight was subsequently generalized by B\'ar\'any \cite{Barany} to all dimensions $d\geq 3$.
While the fraction $2/9$ in the bound of Boros and F\"uredi was shown to be asymptotically tight by Bukh, Matou\v{s}ek and Nivasch \cite{BMN}, it remains an outstanding open problem to determine, in all dimensions $d\geq 3$, the largest possible constant $c=c_d$ that satisfies $F_d(n,1)\geq c_d{n\choose d+1}-o\left(n^{d+1}\right)$. The most recent breakthrough was achieved in 2010 due to Gromov \cite{Gromov}, who attained the lower bound $c_d\geq \frac{2d}{(d+1)!(d+1)}$ via a highly involved topological argument. On the other hand, the stretched grid construction of Bukh, Matou\v{s}ek and Nivasch \cite{BMN} has demonstrated that $c_d\leq \frac{(d+1)!}{(d+1)^{(d+1)}}$.

\medskip
The focus of this paper lies on the more general {\it Second Selection Theorem} which concerns arbitrary $(d+1)$-uniform hypergraphs in $\reals^d$ and yields, for any dimension $d\geq 2$, an exponent $\beta_d$ so that $F_d(n,\eps)=\Omega_d\left({n\choose d+1}\eps^{\beta_d}\right)$ for all $0<\eps\leq 1$. This such result, due to Alon, B\'{a}r\'{a}ny, F\"{u}redi and Kleitman \cite{AlonSelections}, yields an astronomical exponent of $\beta_d=(4d+1)^{d+1}$.

\begin{theorem}[Alon-B\'ar\'any-F\"uredi-Kleitman \cite{AlonSelections}, 1990]\label{Theorem:Main}
	Let $\HH=(P,E)$ be a $(d+1)$-uniform geometric hypergraph with $|E|\geq \eps{n\choose d+1}$ hyperedges. Then there exists a point $x\in \reals^d$ piercing $\displaystyle\Omega\left(\eps^{(4d+1)^{d+1}}{n\choose d+1}\right)$ hyperedges.
\end{theorem}

One of the key ingredients in the proof of Alon, B\'{a}r\'{a}ny, F\"{u}redi and Kleitman, which is briefly sketched in Appendix \ref{Appendix:Tverberg}, is the Erd\H{o}s-Simonovits theorem \cite{ErdosSimon} -- a general T\'uran-type result for hypergraphs which yields  relatively many copies of the complete $(d+1)$-partite $(d+1)$-uniform simplicial hypergraph $K_{t,\ldots,t}$ within $(P,E)$, so that each part is comprised of some $t=t_d=\Theta(d)$ points of $P$. According to the so called Colored Tverberg Theorem \cite{ColoredTverberg,BMZ} -- the other key ingredient in the proof of Alon, B\'{a}r\'{a}ny, F\"{u}redi and Kleitman -- each of these complete $(d+1)$-partite instances encompasses $d+1$ vertex-disjoint simplices $\tau_1,\ldots,\tau_{d+1}\in E$ that can be pierced by a single point $x\in \bigcap_{i=1}^{d+1} \conv(\tau_i)$. 


The central open question in this regard is to determine the smallest possible exponent $\beta_d$ of $\eps$ so that Theorem \ref{Theorem:Main} would still hold. 
 While the exponent $\beta_d=(4d+1)^{d+1}$ in Theorem \ref{Theorem:Main} depends on the precise constant $t_d=\Theta(d)$ in the Colored Tverberg Theorem -- a rather deep result of independent interest that was derived by \v{Z}ivaljevi\'c and Vre\'cica \cite{ColoredTverberg} via the topological framework of equivariant maps \cite{BorsukUlam}, its overall form $\beta_d=(t_d)^{d+1}$ stems from the Erd\H{o}s-Simonovits theorem, which is oblivious of the implicit geometry of the simplices in $(P,E)$. 
 
Despite the long-known ad-hoc estimates $F_2(n,\eps)=\Omega\left(\eps^{3+o(1)} {n\choose 3}\right)$ in dimension $d=2$ \cite{SelectionPlane,EppsteinSelection},
the only improvement of the general selection exponent $\beta_d$, from $(4d+1)^{d+1}$ to $(2d+2)^{d+1}$ (and $(d+1)^{d+1}$ if $d+2$ is a prime larger than $4$), was attained in 2010 by Blagojevi\'c, Matschke and Ziegler \cite{BMZ}, by slightly improving the Colored Tverberg constant and then plugging it into the old analysis of Alon, B\'{a}r\'{a}ny, F\"{u}redi and Kleitman. See the survey articles \cite{BaranyKalai,TopologicalSurvey} and a popular textbook \cite[Chapter 9]{JirkaBook}, for a more complete review of the state of the art.

Let us also mention the existence of interesting {\it colored} variants of Theorem \ref{Theorem:FirstSelection}, due to Pach \cite{PachTheorem} and Karasev \cite{Karasev}, which are formulated in Section \ref{Sec:Prelim}. The latter result is used in the sequel to establish the improved selection exponent $\beta_d$.

\medskip
\noindent{\bf Halving simplices and $k$-sets in $\reals^d$.} One of the best known, and most challenging, open problems in discrete and computational geometry\footnote{Specifically, it figures as Problem 7 on the Open Problems Project \cite{TOPP}.} is to determine the maximum possible number $\psi_d(n)$ of so called {\it halving simplices} that can be determined by a set $P$ of $n$ points in general position in $\reals^d$ \cite[Section 11]{JirkaBook}. These are the simplices in ${P\choose d}$ whose supporting hyperplanes $H$ leave at most $\lceil (n-d)/2\rceil$ of the remaining $n-d$ points of $P$ in each open halfspace of $\reals^d\setminus H$. 

 B\'ar\'any, F\"uredi and Lov\'asz \cite{BFL} used the so called Lov\'asz Antipodality Lemma \cite{Lovasz} to deduce the following fundamental relation between the quantities $F_{d-1}(n,\eps)$ and $\psi_d(n)$; also see \cite[Theorem 11.3.3]{JirkaBook}. 

\begin{theorem}[\cite{BFL}]\label{Theorem:RelationHalving}
	For any $d\geq 3$, we have that $\psi_d(n)=O(n^{d-1/\beta_{d-1}})$, where $\beta_{d-1}$ denotes the exponent in the Second Selection Theorem. 
\end{theorem}

Thus, the improved selection exponent of Blagojevi\'c, Matschke, and Ziegler \cite{BMZ} yields $\psi_d(n)=O\left(n^{d-1/(2d)^{d}}\right)$. 
While Theorem \ref{Theorem:RelationHalving} yields the so far only known general $o\left(n^d\right)$-upper bound for the number of halving simplices in dimension $d\geq 5$ (see \cite{BaranyKalai} and \cite[Sections 11]{JirkaBook}), vastly better upper bounds of $\psi_2(n)=O\left(n^{4/3}\right)$ \cite{Dey}, $\psi_3(n)=O\left(n^{5/2}\right)$ \cite{ksets3D}, and $\psi_4(n)=O\left(n^{4-1/18}\right)$ \cite{ksets4D} have been derived via ad-hoc arguments -- still a far cry from the best known lower bound $\psi_d(n)=\Omega\left(e^{\sqrt{\log n}} n^{d-1}\right)$, due to T\'{o}th \cite{Toth}.
  
Combined with the Clarkson-Shor sampling argument \cite{CS}, any upper bound of the form $\psi_d(n)=O\left(n^{d-\alpha}\right)$ extrapolates, for $0\leq k\leq (n-d)/2$, to a more general upper bound $O\left(n^{\lfloor d/2\rfloor}(k+1)^{\lceil d/2\rceil-\alpha}\right)$ on the maximum number of
  \noindent {\it $k$-sets} -- subsets of $k$ points within an $n$-point $P$ in $\reals^d$, that can be cut out by open halfspaces; see \cite{Levels} and \cite[Theorem 11.1.1]{JirkaBook}. By the means of point-hyperplane duality \cite[Section 5]{JirkaBook}, the same upper bound applies to the maximum complexity of the so called {\it $k$-th level} in an arrangement of $n$ hyperplanes within $\reals^d$ -- a fundamental structure in computational geometry which bears significance to geometric range searching problems and optimization in a fixed dimension \cite{ArrangementsSurvey}.

\subsection{Semi-algebraic hypergraphs} 

Most Ramsey-type and regularity results that exist for general graphs and hypergraphs, are known to hold in a much stronger form for hypergraphs with a {\it bounded semi-algebraic description complexity} \cite{AlonSemi,Overlap,Regularity}.

 We say that a set $Y\subseteq \reals^{d}$ is {\it semi-algebraic} if it is the locus of all the points in $\reals^{d}$ that satisfy a given Boolean combination of a finite number of polynomial equalities and inequalities in the $d$ coordinates of $\reals^{d}$ \cite{BasuBook}. Such a semi-algebraic description of $Y$ has complexity $(D,s)$ if it involves $s$ polynomial constraints of maximum degree $D$. The definition naturally extends to subsets $A$ of matrices in $\reals^{d\times k}$, which can be treated as vectors in $\reals^{dk}$. 

We then say that a $k$-uniform, $k$-partite hypergraph $(V_1,\ldots,V_k,E)$ admits {\it a semi-algebraic description of complexity $(D,s)$} in $\reals^d$ if $\bigcup_{i=1}^k V_i\subseteq \reals^d$, and the edge set $E$ is induced, as a subset of $V_1\times\ldots \times V_k\subseteq \reals^{d\times k}$, by a semi-algebraic set $Y\subseteq \reals^{d\times k}$ of complexity $(D,s)$ (so that $E=Y\cap (V_1\times\ldots \times V_k)$). The definition naturally extends to symmetric $k$-uniform hypergraphs $(V,E)$ that are not apriori $k$-partite.



\smallskip
\smallskip
\noindent{\bf A Tur\'an-type theorem for semi-algebraic hypergraphs.} The {\it polynomial regularity lemma} of Fox, Pach and Suk \cite{Regularity} states that, for any $0<\varepsilon<1/2$, and
any hypergraph $\G=(V,E)$ with a bounded semi-algebraic description complexity, there is an equitable\footnote{A subdivision is called {\it equitable} if any pair of parts $U_i,U_j$ are of nearly same size, so that $\left||U_i|-|U_j|\right|\leq 1$. } subdivision $V=U_1\uplus \ldots \uplus U_K$ into $K\leq 1/\varepsilon^{c}$ parts, with the property that all by an $\varepsilon$-fraction of the $k$-tuples $(U_{i_1},\ldots,U_{i_k})$ are homogeneous -- we have that either $U_{i_1}\times U_{i_2}\times\ldots\times U_{i_k}\subseteq E$ or $\left(U_{i_1}\times U_{i_2}\times\ldots\times U_{i_k}\right)\cap E=\emptyset$. 



 We say that a $k$-uniform, $k$-partite hypergraph $(V_1,\ldots,V_k,E)$ is {\it $\varepsilon$-dense} if we have that $|E|\geq \varepsilon |V_1|\cdot\ldots\cdot |V_k|$.
The regular partition of Fox, Pach, and Suk relies on the following Tur\'an-type statement of independent interest:
for any $\varepsilon$-dense, $k$-uniform, and $k$-partite hypergraph $(V_1,V_2,\ldots,V_k,E)$ of bounded semi-algebraic description complexity and, in particular, of bounded maximum degree $D$, there exist subsets $W_i\subseteq V_i$ of size $|W_i|=\Omega\left(\varepsilon^{d+D\choose d}|V_i|\right)$, and so that $W_1\times \ldots\times W_k\subseteq E$.

\smallskip
Our improved bound for the point-selection problem will require a more efficient Tur\'an-type theorem for semi-algebraic hypergraphs, which incurs no dependence on the maximum degree $D$ in the exponent. 

\subsection{Main results} 
\noindent{\bf 1. An improved selection exponent $\beta_d$ in dimension $d\geq 3$.} The primary contribution of this paper lies in reducing the best selection exponent $\beta_d$ that is known in any dimension $d\geq 3$, from $(2d+2)^{d+1}$ \cite{BMZ} to $d^5+o\left(d^5\right)$.
\begin{theorem}\label{Thm:MainMain}
Let $d\geq 2$ be an integer, and $\delta>0$. Then for any set $P$ of $n$ points in general position in $\reals^d$, and any subset $E\subseteq {P\choose d+1}$ of $d$-dimensional simplices with vertices in $P$, there is a point $x\in \reals^d$ piercing $\displaystyle\Omega\left(\eps^{(d^4+d)(d+1)+\delta}{n\choose d+1}\right)$ of the simplices in $E$; the implicit constants of proportionality may depend on $\delta$ and $d$.
\end{theorem}

\noindent{\bf 2. An improved bound for halving hyperplanes and $k$-sets in dimension $d\geq 5$.} Combined with Theorem \ref{Theorem:RelationHalving}, Theorem \ref{Thm:MainMain} yields the following estimate for the maximum number $\psi_d(n)$ of the halving simplices that can be determined by any $n$ points in general position in $\reals^d$, which improves over the previous state of the art in all dimensions $d\geq 5$.

\begin{theorem}\label{Thm:ksets}
Let $d\geq 2$ and $\delta>0$. Then for any $n\geq 1$ we have that 
$$
\psi_d(n)=O\left(n^{d-\frac{1}{d(d-1)^4+d(d-1)}+\delta}\right).
$$

\noindent More generally, for any $0\leq k\leq  (n-d)/2$, any set of $n$ points in general position in $\reals^d$ determines
$$
O\left(n^{\lfloor d/2\rfloor}(k+1)^{\lceil d/2\rceil-\frac{1}{d(d-1)^4+d(d-1)}+\delta}\right)
$$ 
\noindent $k$-sets.
The implicit constants of proportionality may depend on $\delta$ and $d$.
\end{theorem}

\noindent{\bf 3. Efficient semi-algebraic regularity and Tur\'an-type theorems.}  
 A essential ingredient of our proof of Theorem \ref{Thm:MainMain} is an efficient Tur\'an-type theorem for semi-algebraic hypergraphs.\footnote{Since the hypergraph $(P,E)$ in Theorem \ref{Thm:MainMain} does not necessarily admit a semi-algebraic description of bounded complexity in $d$, our Tur\'an-type result will be applied for a auxiliary semi-algebraic hypergraph, which will be defined in the parametric space $\reals^{d^2}$ -- the natural representation space of $(d-1)$-dimensional simplices in $\reals^d$.}
Note that the implicit maximum degree $D$ for the majority of the ``natural" semi-algebraic hypergraphs, including these that arise in the proof of Theorem \ref{Thm:MainMain}, is not easily seen to be polynomial in the ambient dimension $d$. Hence, using the bound of Fox, Pach and Suk \cite{Regularity} would have resulted in an astronomic selection exponent $\beta_d$. In the sequel we establish the following stronger property.

\begin{theorem}\label{Theorem:NewTuran}
Let $\varepsilon>0$, and let $(V_1,\ldots,V_k,E)$ be an $\varepsilon$-dense, $k$-uniform, and $k$-partite hypergraph in $\reals^d$, that admits a semi-algebraic description of bounded complexity $(D,s)$. Then there exist subsets $W_1\subseteq V_1,\ldots,W_k\subseteq V_k$ with the following properties:

\begin{enumerate}
	\item  $W_1\times W_2\times\ldots\times W_k\subseteq E$,
	\item $|W_1|\cdot |W_2|\cdot \ldots\cdot |W_k|=\Omega\left(\varepsilon^{d(k-1)+1}\cdot |V_1|\cdot |V_2|\cdot \ldots\cdot |V_k|\right)$. 
\end{enumerate}

Furthermore, for all $1\leq i\leq k-1$ we have that
\begin{equation*}
|W_i|=\begin{cases}
\Omega\left(\varepsilon^{d}|V_i|\right) &\text{if $d_1\geq 2$}\\
\Omega\left(\frac{\varepsilon|V_i|}{\log^2(1/\varepsilon)}\right) &\text{if $d_1=1$},
\end{cases}
\end{equation*}

\noindent and we have that $|W_k|=\Omega\left(\varepsilon|V_k|\right)$. The implicit constants of proportionality depend only on $d,k,D$ and $s$.
\end{theorem}

\medskip
In contrast to the analysis of Fox, Pach and Suk, who invoked Chazelle's Hyperplane Cutting Lemma \cite{Cuttings} in the Euclidean space $\reals^{d'}$ of dimension $d'={d+D\choose d}-1$, at the heart of the proof of Lemma \ref{Lemma:BipartiteRegularity} lies a more efficient spatial subdivision which combines Haussler's Packing Lemma for geometric range spaces  \cite{Haussler} with the multi-level polynomial partition\footnote{In the companion study of the author \cite{RubinRegularity}, the polynomial method of Guth and Katz \cite{GuthKatz} is further adapted to establish a more efficient version of the polynomial regularity lemma.} 
 of Matou\v{s}ek and P\'at\'akova \cite{MultiLevel}. 

\subsection{Proof overview, and the roadmap to the paper} 
 In contrast to the argument of Alon, B\'{a}r\'{a}ny, F\"{u}redi and Kleitman \cite{AlonSelections}, which combines the Erd\H{o}s-Simonovits theorem with the Colored Tverberg Theorem in an essentially {\it bottom-up} manner so as to force many $(d+1)$-tuples of simplices $\tau_1,\ldots,\tau_{d+1}\in E$ that satisfy $\bigcap_{i=1}^{d+1} \conv(\tau_i)\neq \emptyset$, the proof of Theorem \ref{Thm:MainMain} proceeds in a {\it top-down} fashion. 
 To this end, we consider the almost-even partition $\Pi=\{P_1,\ldots,P_r\}$ of $P$, by the means of the Simplicial Partition Theorem of Matou\v{s}ek \cite{PartitionTrees}, into $r$ pairwise-disjoint subsets $P_1,\ldots,P_r$ of size $\Theta(n/r)$; each part $P_i\in \Pi$ is enclosed in a simplex $\Delta_i$ so that any hyperplane crosses only $O\left(r^{1-1/d}\right)$ of the elements of the family $\Sigma=\{\Delta_1,\ldots,\Delta_r\}$.  
 
We then study the complete $(d+1)$-uniform hypergraph over $\Sigma$, whose edges correspond to $(d+1)$-size subfamilies of simplices within $\Sigma$.
A fine enough simplicial partition $\Pi$ of $P$ guarantees that the vast majority of the sub-families $\K=\{\Delta_{i_1},\ldots,\Delta_{i_{d+1}}\}$  of $d+1$ simplices, with $1\leq i_1<i_2<\ldots<i_{d+1}\leq r$, are ``far apart" and, therefore, resemble points in the following topological sense: the intersections among the convex hulls of the various combinations $\bigcup_{j\in J}\Delta_{i_j}$, with $J\subsetneq [d+1]$, resemble those of the faces on the boundary of a $d$-dimensional simplex. In particular, all the simplices $\conv\left(p_1,\ldots,p_{d+1}\right)$, whose $d+1$ vertices $p_j$ are selected from the $d+1$ enclosed sets $P_{i_j}\subset \Delta_{i_j}$, must have a non-empty common intersection; see Figure \ref{Fig:Tight}.
In the sequel, such families $\K=\{\Delta_{i_1},\ldots,\Delta_{i_{d+1}}\}$ will be referred to as {\it tight}, whereas the remaining $(d+1)$-size families will be called {\it loose}.\footnote{This is a natural $d$-dimensional extension of the notion of {\it tight triples} of convex sets in $\reals^2$ as defined, e.g., in a recent survey of Holmsen \cite{T3}. For the sake of brevity, this outline omits several other classes of families within our partition of $P$, along with certain classes of edges in $E$, which altogether have no effect on the bound in Theorem \ref{Thm:MainMain}.} 

To establish Theorem \ref{Thm:MainMain}, it suffices to show that only $O\left(r^{d+1-1/\alpha_d}\right)=o\left(r^{d+1}\right)$ of the $(d+1)$-size families $\K$ within $\Sigma$ are loose, where $\alpha_d=d^4+d$ denotes a suitable constant whose choice will be explained in the sequel.
Fixing $r=(1/\eps)^{\alpha_d}$ will then guarantee the existence of at least $|E|/2$ edges in $E$ so that each of them can be ``assigned" (via its $d+1$ vertices) to a tight $(d+1)$-size sub-family $\K$ within $\Sigma$. Thus, by the pigeonhole principle, at least $\frac{|E|}{2{r\choose d+1}}=\epsilon^{d^5+O(d^4)}n^{d+1}$ of these edges will be assigned to the {\it same} tight family $\K$, and have a non-empty common intersection.

\begin{figure}
    \begin{center}      
        \input{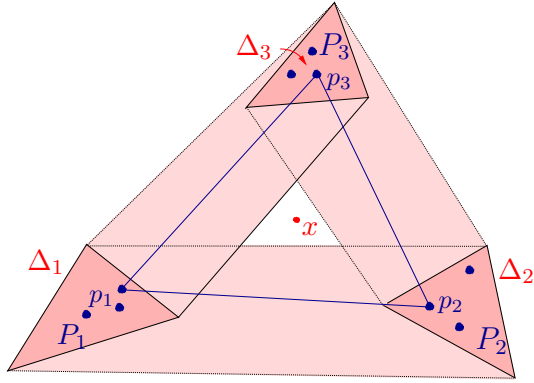}
        \caption{\small A tight triple of simplices $\{\Delta_1, \Delta_2,\Delta_3\}\subset \Sigma$ in $\reals^2$, with enclosed subsets $P_1,P_2,P_3$. The point $x\in \reals^3$ pierces every triangle $\triangle p_1p_2p_3$, with vertices $p_1\in P_1,p_2\in P_2$ and $p_3\in P_3$.}
        \label{Fig:Tight}
    \end{center}
\end{figure}

To bound the number of the loose families $\K$ of $d+1$ simplices within $\Sigma$, notice that such $(d+1)$-tuples comprise a hypergraph of bounded semi-algebraic description complexity in $\reals^{d\times (d+1)}$ -- the natural representation space of the simplices of $\Sigma$.\footnote{Using a more careful argument, the dimensionality of this hypergraph can be reduced to $d^2$.} 
The desired upper estimate stems from the following two observations.
\begin{enumerate}
	\item  If the above semi-algebraic hypergraph over $\Sigma$ is $\varepsilon$-dense, then Theorem \ref{Theorem:NewTuran}, together with Karasev's colored selection result, yield a point $x\in \reals^d$ that lies in the convex hulls $\conv\left(\bigcup\K\right)$ of at least $\varepsilon^{d^3+1}{r\choose d+1}$ loose families $\K\in {\Sigma\choose d+1}$.

	\item On the other hand, the fact that no hyperplane crosses more than $O\left(r^{1-1/d}\right)$ of the simplices in $\Sigma$, implies that no point in $\reals^d$ can pierce more than $O\left(r^{d+1-1/d}\right)$ of the above convex hulls $\conv\left(\bigcup \K\right)$.
\end{enumerate}

The rest of this paper is organized as follows.
In Section \ref{Sec:Prelim} we introduce the essential notation, and assemble the key technical ingredients that underly the improved bound of Theorem \ref{Thm:MainMain}: (i) the colored selection theorems of Pach \cite{PachTheorem} and 
Karasev \cite{Karasev}, (ii) the crossing patterns of hyperplanes amid a family of $d+1$ convex sets, including the full characterization of tight families of convex sets, 
(iii) Matou\v{s}ek's simplicial partition theorem, and (iv) a more explicit framework for the study of semi-algebraic sets, and hypergraphs of bounded semi-algebraic description complexity in $\reals^d$. 
The main result of this paper, namely, Theorem \ref{Thm:MainMain} is established in Section \ref{Sec:Main}, whereas the proof of our semi-algebraic Tur\'an-type result -- Theorem \ref{Theorem:NewTuran} -- is relegated to Section \ref{Sec:Polynomial}. 
In Section \ref{Section:Concluding}, we conclude the paper with several by-products of our proof of Theorem \ref{Thm:MainMain}. These include an interesting extension of Pach's theorem, and an improved estimate for the same-type lemma of B\'ar\'any and Valtr \cite{BaranyValtr}.


\section{Geometric preliminaries}\label{Sec:Prelim}

\subsection{Notation} \label{Subsec:Notation}

For any point set $A\subseteq \reals^d$ we use $\conv(A)$ to denote the {\it convex hull} of $A$, and we use $\overline{A}$ to denote the closure of $A$ under the standard Euclidean topology in $\reals^d$. In addition, we use $\aff(A)$ to denote the {\it affine closure} of $A$ -- the smallest possible affine space within $\reals^d$ (that is, a translate of a linear sub-space) that contains $A$ \cite[Section 1]{JirkaBook}.

As was previously mentioned, it is assumed that the vertex set $P$ of the $(d+1)$-uniform simplicial hypergraph $(P,E)$ is in {\it general position} \cite{GeneralPosition}, which in particular means that any subset $A\subseteq P$ of at most $d+1$ points is {\it affinely independent}, in the sense that $\aff(A)$ is linearly isomorphic to $\reals^{|A|-1}$ or, equivalently, $\conv(A)$ is a $(|A|-1)$-dimensional simplex with a non-empty relative interior.

For the sake of our analysis, it pays to assume, with no loss of generality, a stronger form of general position, which in particular implies that any $d+1$ pairwise disjoint subsets $A_1,\ldots,A_{d+1}\in {P\choose d}$ satisfy $\bigcap_{i=1}^{d+1}\aff(A_i)=\emptyset$. This property can be enforced, e.g, by applying an infinitesimally small perturbation to the set $P$ \cite{GeneralPosition}.




\subsection{The colored selection theorems of Pach and Karasev} 

As was noted in Section \ref{sec:intro}, the original proof of Theorem \ref{Theorem:Main} by Alon, B\'{a}r\'{a}ny, F\"{u}redi and Kleitman \cite{AlonSelections}, which we sketch in Section \ref{Appendix:Tverberg}, employs a combination of the theorem of Erd\H{o}s and Simonovits \cite{ErdosSimon}, the Colored Tverberg Theorem \cite{ColoredTverberg}, and the Fractional Helly's Theorem \cite{FracHelly}.

Let us also mention a closely related Tverberg-type result, due to Pach \cite{PachTheorem}, which stands ``in-between" the First Selection Theorem and the Colored Tverberg Theorem.

\begin{theorem}[Pach's theorem \cite{PachTheorem}] \label{Theorem:Pach}
For any integer $d\geq 2$ there is a constant $c'_d>0$ so that the following statement holds:
	Let $P_1,\ldots,P_{d+1}$ be pairwise disjoint $n$-points sets in general position in $\reals^d$ (also called color classes, or simply colors). Then there is a point $x\in \reals^d$, and subsets  $Q_i\subseteq P_i$ of cardinality $|Q_i|\geq c'_dn$, for all $1\leq i\leq d+1$, so that $x$ pierces every simplex 
	in the set $\{\conv(p_1,\ldots,p_{d+1})\mid p_1\in Q_1,\ldots,p_{d+1}\in Q_{d+1}\}$ (whose simplices are commonly called colorful, or rainbow).
	\end{theorem}

Our argument in Section \ref{Sec:Main} will rely on the following colored variant of First Selection Theorem, which was established by Karasev \cite{Karasev} in the wake of Gromov's breakthrough \cite{Gromov}, and subsequently employed by Fox, Pach and Suk \cite{Regularity} to improve the constant $c'_d$ in Theorem \ref{Theorem:Pach}.

\begin{theorem}[The Colored Selection Theorem of Karasev \cite{Karasev}] \label{Theorem:Karasev}
For any integer $d\geq 2$, and any $d+1$ point sets $P_1,\ldots,P_{d+1}$, whose union $\bigcup_{i=1}^{d+1} P_i$ is in general position in $\reals^d$, 
there is a point $x\in \reals^d$ that lies in at least 
$$
\frac{1}{(d+1)!}|P_1|\cdot |P_{2}|\cdot\ldots\cdot |P_{d+1}|
$$
\noindent	(closed) simplices in $\{\conv(p_1,\ldots,p_{d+1})\mid p_1\in P_1,\ldots,p_{d+1}\in P_{d+1}\}$.\footnote{More precisely, the original statement of Karasev deals with simplices whose $d+1$ vertices are drawn independently from some $d+1$ {\it strictly continuous} probability distributions. However, it immediately extends, via a standard limiting argument, to piercing the {\it closed} rainbow simplices of $P_1\times\ldots\times P_{d+1}$ in the setting of Theorem \ref{Theorem:Karasev}. To this end, we fix an arbitrary small $\eta>0$, and replace each set $P_i$ by a uniformly weighted combination of $|P_i|$ uniform distributions over the $\eta$-discs $B(p,\eta)$, with $p\in P_i$. Applying the continuous statement with ever decreasing radii $\eta_j=1/(j+1)$, for $j\in {\mathbb N}$, yields a sequence of points $x_j=x(\eta_j)$ whose partial limit(s) $x$ meet the criteria of Theorem \ref{Theorem:Karasev}.} 

	\end{theorem}

One of the strengths of Karasev's result is its elementary 1-page proof  which
makes only a nominal use of topology (and, in particular, does not rely on the Colored Tverberg Theorem). More importantly, Karasev's result does not require that $|P_1|=|P_2|=\ldots=|P_{d+1}|$.
Using a more careful analysis, the fraction $\frac{1}{(d+1)!}$ in Theorem \ref{Theorem:Karasev} can be improved to $\frac{2d}{(d+1)!(d+1)}$ \cite{Jiang}.


\subsection{Intersection patterns of hyperplanes and convex sets} \label{Subsec:PrelimTight}

Though the results of this section will be applied primarily to families of simplices of affine dimension $d$ or $d-1$ within $\reals^d$, most of our claims will be stated in full generality for families of compact convex sets.

The space of hyperplanes in $\reals^d$ is commonly identified with the affine Grassmanian $G(d,d-1)$, which is a $d$-dimensional topological manifold \cite{WengerProgress}. Each hyperplane $H$ in $\reals^d$ can be {\it oriented} in two distinct ways which correspond to the possible labelings of the open halfspaces of $\reals^d\setminus H$ as $H^+$ and $H^-$. 

\medskip
\noindent{\bf Arrangements of hyperplanes.} 
Any finite collection $\HH$ of hyperplanes within $\reals^d$ determines the {\it arrangement} $\A(\HH)$ of $\HH$ -- a hierarchical decomposition of $\reals^d$ into $k$-dimensional {\it faces} (or, shortly, {\it $k$-faces}) -- relatively open $k$-dimensional polyhedra, of dimensions $0\leq k\leq d$; see, e.g., \cite[Section 5]{JirkaBook}. The {\it $d$-faces}, or {\it cells}, of $\A(\HH)$ are the connected open components of $\reals^d\setminus \left(\bigcup \HH\right)$.
Each of these cells $\Delta$ yields an orientation of the hyperplanes of $\HH$ so that $\Delta=\bigcap_{H\in \HH} H^-$.
 The boundary complex of each $k$-face in $\A(\HH)$, with $k\geq 1$, is comprised of certain relatively open, $(k-1)$-dimensional faces, within $\A(\HH)$, along with their respective boundary complexes.
 The $(d-1)$-dimensional faces of $\A(\HH)$ are called {\it facets}, and each of them is an open $(d-1)$-dimensional polyhedron which is contained in some hyperplane $H_i$, for $1\leq i\leq d+1$.
 The $0$-dimensional faces of $\A(\HH)$ are called {\it vertices}; each of them is an intersection of some $d$ hyperplanes in $\HH$.  The $1$-dimensional faces of $\A(\HH)$ are called {\it edges}.

If $\HH$ is comprised of $d+1$ hyperplanes $H_1,\ldots,H_{d+1}$ in general position, these determine exactly $d+1$ vertices which lie on the boundary of the only bounded cell $\Delta$ within $\A(\HH)$ -- an open $d$-simplex. 
Each of the unbounded cells of $\A(\HH)$ contains an infinite ray. Hence, within the $(d-1)$-dimensional sphere ${\mathbb S}^{d-1}$ of line directions, such cells of $\A(\HH)$ correspond to the cells that are cut out by the $d+1$ great $(d-2)$-spheres that are parallel to the hyperplanes of $\HH$.

\begin{lemma}\label{Lemma:ArrangementSimplex}
Let $H_1,\ldots,H_{d+1}$ be $d+1$ oriented hyperplanes in general position in $\reals^d$.
Then exactly one of the following cases holds:

\begin{itemize}
	\item[(i)] One of the sets $\bigcap_{i=1}^{d+1} H_i^-$ and $\bigcap_{i=1}^{d+1} H_i^+$ is an open simplex -- the only bounded open cell in $\A(\{H_1,\ldots,H_{d+1}\})$, whereas the other set is empty.
	\item[(ii)] Both sets $\bigcap_{i=1}^{d+1} H_i^-$ and $\bigcap_{i=1}^{d+1} H_i^+$ are non-empty yet unbounded open cells in $\A(\{H_1,\ldots,H_{d+1}\})$.
 \end{itemize}
	
\end{lemma}
\begin{proof}

If $\bigcap_{i=1}^{d+1} H_i^-$ is an open simplex, then the cone $\bigcap_{i=1}^d H_i^+$ which is adjacent to the vertex $v_{d+1}=\bigcap_{i=1}^d H_i$, is contained in $H_{d+1}^-$, so that $\bigcap_{i=1}^{d+1} H_i^+=\emptyset$.

If $\bigcap_{i=1}^{d+1} H_i^-=\emptyset$, the hyperplane $H_{d+1}$ misses the cone $\bigcap_{i=1}^d H_i^-$, whose apex $v_{d+1}=\bigcap_{i=1}^d H_i$ then lies in $H^+_{d+1}$. Notice that the
arrangement $\A(\{H_1,\ldots,H_{d}\})$ encompasses a total of $2^d$ cells. Each of these cells is a cone with apex $v_{d+1}$ and $d$ infinite edges that emanate from $v$. Since $H_{d+1}$ misses $\bigcap_{i=1}^d H_i^-$ (and, in particular, all $d$ of its boundary edges), it must meet the $d$ edges that lie on the boundary of the antipodal cone $\bigcap_{i=1}^d H_i^+$ and, thereby, determine a bounded open simplex $\bigcap_{i=1}^{d+1} H_i^+$.

Lastly, if $\bigcap_{i=1}^d H_i^-$ is unbounded, then so is $\bigcap_{i=1}^d H_i^+$; in particular, $\bigcap_{i=1}^d H_i^+$ must be non-empty. Indeed, there must be a line $L$ in $\reals^d$ whose intersection $L\cap \left(\bigcap_{i=1}^d H_i^-\right)$ is an infinite ray $\vec{\rho}$. Then $L\cap \left(\bigcap_{i=1}^d H_i^+\right)$ too must be an unbounded ray with orientation opposite to that of $\rho$. (Within the sphere ${\mathbb S}^{d-1}$ of directions, the two unbounded cells must correspond to a pair of antipodal facets of the arrangement of the $d+1$ great $(d-2)$-spheres of $S_1,\ldots,S_{d+1}$ that are parallel to, respectively, $H_1,\ldots,H_{d+1}$.)
\end{proof}


\medskip
\noindent{\bf Definition -- separated families of convex sets.} We say that a family $\K$ of convex sets in $\reals^d$ is {\it separated} if no $k$ of its members can be crossed by a single $(k-2)$-flat, for $1\leq k\leq d+1$. (If $|\K|\geq d+1$, then it is enough that no $d+1$ sets can be crossed by a single hyperplane.) 

\smallskip
It is known \cite{WellSeparation,PolWen,WengerProgress} that a family $\K=\{K_1,\ldots,K_{d+1}\}$ of $d+1$ compact convex sets in $\reals^d$ is separated if and only if all the selections $(x_1,\ldots,x_{d+1})\in K_1\times\ldots\times K_{d+1}\subseteq \reals^{d\times (d+1)}$, each with $d$ coordinates $x_i=\left(x_{i}(1),\ldots,x_{i}(d)\right)$, have the same non-zero {\it orientation} $\chi(x_1,\ldots,x_{d+1})\in \{-,+\}$, which is given by\footnote{Notice that the common orientation itself may depend on of the labeling $K_1,\ldots,K_{d+1}$.}

\begin{equation*}
\chi(x_1,\ldots,x_{d+1}):=\sign\:{\sf det}
\begin{pmatrix}
    1 & 1 & \cdots & 1\\
	x_{1}(1) & x_{2}(1)& \cdots & x_{d+1}(1)\\
	x_{1}(2)& x_{2}(2)& \cdots & x_{d+1}(2)\\
	\vdots & \vdots & \vdots & \vdots\\
	x_{1}(d)& x_{2}(d)& \cdots & x_{d+1}(d)\\
\end{pmatrix}.
\end{equation*}

If a family $\K$ of $d+1$ compact convex sets in $\reals^d$ is not separated, we say that it is {\it crossed}.

\medskip
\noindent{\bf Definition.}
We say that a hyperplane $H$ in $\reals^d$ is {\it tangent to} (or {\it supports}) a convex set $K\subseteq \reals^d$ if we have that $H\cap K\neq \emptyset$, and $K$ contained in one of the closed halfspaces that are determined by $H$.

\medskip
\noindent{\bf Definition.} Let $\K$ and $\K'$ be families of convex sets in $\reals^d$. We say that a hyperplane $H$ in $\reals^d$ separates $\K$ from $\K'$ if $\bigcup \K$ and $\bigcup \K'$ lie in the opposite closed halfspaces that are determined by $H$. (Note that $H$ can be tangent to one or more sets of $\K\cup \K'$).

\begin{lemma}\label{Lemma:SeparatingHyperplane}
	 Let $\K=\{K_1,\ldots,K_{d+1}\}$ be a separated family of $d+1$ non-empty compact convex sets in $\reals^d$. Then 
	 for any $1\leq i\leq d+1$ there is exactly one hyperplane, which we denote by $H(\K\setminus \{K_i\},\{K_i\})$, that is tangent to each set $K_j$ with $j\in [d+1]\setminus \{i\}$, and separates $\{K_i\}$ from $\{K_j\mid j\in [d+1]\setminus \{i\}\}$.
\end{lemma}

Lemma \ref{Lemma:SeparatingHyperplane} is an easy corollary of the following property, which was observed by Cappell, Goodman, Pach, Pollack, and Sharir \cite[Theorem 3]{Cappell}.

\begin{theorem}\label{Theorem:Chappell}
	 Let $\K$ be a family of $d$ non-empty compact and strictly convex sets in $\reals^d$, with a partition $\K=\K_1\uplus \K_2$. Then there exist exactly two distinct choices of an oriented common tangent $H$ to the sets of $\K$, both of which satisfy $\bigcup \K_1\subseteq \overline{H^+}$ and $\bigcup \K_2\subseteq \overline{H^-}$.

\end{theorem}

\begin{proof}[Proof of Lemma \ref{Lemma:SeparatingHyperplane}]
We can assume, with no loss of generality, that each set $K_i$, with $1\leq i\leq d+1$ is strictly convex; if this is not the case, it can be replaced by the Minkowski sum of $\conv(K_i)$ and the Euclidean ball of arbitrarily small radius, so that a standard limiting argument applies.

Fix $i\in [d+1]$. By Theorem \ref{Theorem:Chappell}, there exist distinct oriented hyperplanes $H$ and $H'$ so that each of them is tangent to every set $K_j$, for $j\in [d+1]\setminus \{i\}$, and we have that $\bigcup_{j\in [d+1]\setminus \{i\}}K_j\subset H^+\cap (H')^+$.

Let us first assume, for a contradiction, that neither of these hyperplanes separates $\{K_i\}$ from $\{K_j\mid j\in [d+1]\setminus \{i\}\}$. Since the family $\K$ is separated, $K_i$ cannot meet $H_1\cup H_2$; hence, the entire collection $\bigcup_{j\in [d+1]} K_{j}$ lies in the closure of the single cell $H^+\cap (H')^+$ of $\reals^d\setminus (H\cup H')$. (In particular, it is impossible that $H$ and $H'$ correspond to the opposite orientations of the same hyperplane, for then $K_i\subset H^+\cap (H')^+=\emptyset$.)

Suppose, with loss of generality, that $H\cap H'\neq \emptyset$. Consider a continuous rotation of the hyperplane $H(\zeta)$ from $H$ to $H'$, around the $(d-2)$-flat $G=H\cap H'$, so that $H(0):=H$, $H(1):=H'$, and for each $\zeta\in [0,1]$ the hyperplane $H(\zeta)$ lies in the closed double wedge $W:=\overline{(H^+\cap (H')^+)\cup (H^-\cap (H')^-)}$. (If $H$ and $H'$ are parallel, then $G$ lies at infinity in the standard projective extension of $\reals^d$, so the rotation around $G$ amounts to a translation within the slab $H^+\cap (H')^+$.) 
 
 Notice that every hyperplane $H(\zeta)$, with $\zeta\in [0,1]$, meets every set $K_j$, for $j\in [d+1]\setminus \{i\}$. To see this, let $j\in [d+1]\setminus \{i\}$, and $x_j$ and $x'_j$ be points chosen from $H\cap K_j$ and $H'\cap K_j$, respectively.
By the convexity of $K_j$, the segment $x_jx'_j\subseteq K_j$ lies entirely in the closure of $H^+\cap (H')^+$ and, therefore, meets the hyperplane $H(\zeta)$ as it sweeps $H^+\cap (H')^+$ from $H$ to $H'$.

As $W$ is continuously swept by the moving hyperplane $H(\zeta)$, and contains the set $K_i\subseteq H^+\cap (H')^+$, there must exist $\zeta_0\in (0,1)$ so that $H(\zeta_0)\cap K_i\neq \emptyset$. However, then the hyperplane $H(\zeta_0)$ meets all the $d+1$ sets $X_1,\ldots,X_{d+1}$, so the $(d+1)$-family $\F$ cannot be separated.

Lastly, if $\{K_i\}$ is separated by {\it both} hyperplanes $H,H'$ from $\{K_j\mid j\in [d+1]\setminus \{i\}\}$, the we similarly argue that $K_i$ lies in the open cell $H^-\cap H'^-\subset W$, which is too swept by the continuously rotating hyperplane $H(\zeta)$ through $H\cap H'$. (Notice that this time, the hyperplanes $H$ and $H'$ cannot be parallel, or else $H^-\cap H'^-$ would be empty.) Thus, we again derive a hyperplane $H(\zeta_0)$, with $\zeta_0\in (0,1)$, that crosses all the $d+1$ sets $K_1,\ldots,K_{d+1}$, which is contrary to the assumption that the family $\K$ is separated.
\end{proof}


{\noindent \bf Definition -- tight families of convex sets.} 
For any finite family $\K$ of sets in $\reals^d$, let us denote
$$
C(\K):=\conv\left(\bigcup_{K\in \K} K\right).
$$

\noindent Let $\K=\{K_1,\ldots,K_{d+1}\}$ be a separated family $\F$ of $d+1$ non-empty compact convex sets in $\reals^d$. 
\begin{enumerate}

\item  We say that the family $\K$ is {\it tight} if $\bigcap_{i=1}^{d+1}C\left(\K\setminus \{K_i\}\right)=\emptyset$ (see Figure \ref{Fig:Tight1} (left)), and we say that $\K$ is {\it loose} otherwise (see Figure \ref{Fig:Split} (left)). 

	\item For each $1\leq i\leq d+1$ we orient  the hyperplane $H_i=H\left(\K\setminus\{K_i\},\{K_i\}\right)$ in Lemma \ref{Lemma:SeparatingHyperplane} so that $H_i^-\supset K_i$ and $\overline{H_i^+}\supset C(\K\setminus \{K_i\})$, and
 refer to these $d+1$ hyperplanes $H_i=H\left(\K\setminus\{K_i\},\{K_i\}\right)$ as the {\it inner tangents of $\K$}.

 \item We denote $\Delta(\K):=\bigcap_{i=1}^{d+1}H_i^-$; see Figure \ref{Fig:Tight1} (right).

\end{enumerate}

\begin{figure}
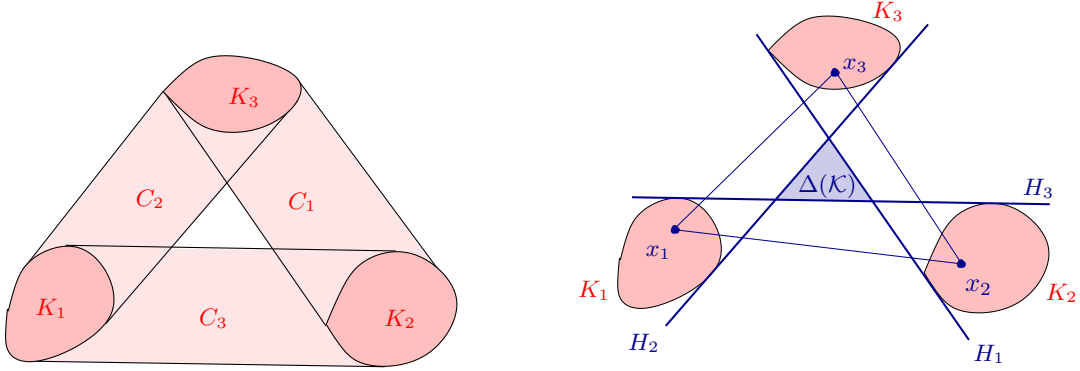

    \begin{center}      
        \input{Tight3.pdf_t}\hspace{2cm}\input{Tight1.pdf_t}
        \caption{\small The convex sets $K_1, K_2$ and $K_3$ comprise a tight triple $\K$ in $\reals^2$.  Left: The convex hulls $C_i=C\left(\K\setminus \{K_i\}\right)$ are depicted for $1\leq i\leq 3$. Note that we have $\bigcap_{i=1}^3C_i=\emptyset$. 
        Right: The simplex $\Delta(\K)=\bigcap H_i^-$ is depicted along with the inner tangents $H_i=H\left(\K\setminus \{K_i\},\{K_i\}\right)$. By Theorem \ref{Theorem:StronglySeparated}, if $\K$ is tight, then $\Delta(\K)$ is non-empty, and is contained in every triangle $\triangle x_1x_2x_3$ with $x_i\in K_i$, for $1\leq i\leq 3$.}
        \label{Fig:Tight1}
    \end{center}
\end{figure}



\begin{theorem}\label{Theorem:StronglySeparated}
	 Let $\K=\{K_1,\ldots,K_{d+1}\}$ be a separated family of $d+1$ compact convex sets in $\reals^d$, so that $K_i\neq \emptyset$ for all $1\leq i\leq d+1$. Then the following statements are equivalent.\footnote{It has come to the authors attention that Theorem \ref{Theorem:StronglySeparated} was independently observed, in part, in the study of Castillo, Doolittle, and Samper \cite{Castillo}. Nevertheless, we supply its full proof for the sake of completeness.}
	 \begin{enumerate}
	 	\item[(i)] $\K$ is tight. 
	 	\item[(ii)] $\Delta(\K)$ is a nonempty open $d$-simplex.
	 	\item[(iii)] The set
	 	$$
	 	\bigcap_{x_1\in K_1,\ldots,x_{d+1}\in K_{d+1}}\conv(x_1,\ldots,x_{d+1})
	 	$$
	 	\noindent has non-empty interior.
	 		
	 		 \end{enumerate}
	 		 
	\noindent  Furthermore, if $\K$ is tight then we have that
	 $$
	\overline{\Delta(\K)}=\bigcap_{x_1\in K_1,\ldots,x_{d+1}\in K_{d+1}}\conv(x_1,\ldots,x_{d+1}).
	 $$

	  \end{theorem}

To facilitate the proof of Theorem \ref{Theorem:StronglySeparated}, we first establish a more elementary property of an arrangement of $d+1$ oriented hyperplanes in general position.

\bigskip
\noindent{\bf Definition.} We say that a $(d+1)$-sequence of points $\sigma=(x_1,\ldots,x_{d+1})\in \reals^{d\times(d+1)}$ is {\it split} by a sequence $(G_1,\ldots,G_{d+1})$ of oriented hyperplanes in $\reals^d$ if (i) $\Delta=\bigcap_{i=1}^{d+1} G_i^-$ is a non-empty open $d$-simplex in $\reals^d$ (in particular, $G_1,\ldots,G_{d+1}$ must be in general position), and (ii) every point $x_i$ lies in the closure of the cone $\bigcap_{j\in [d+1]\setminus \{i\}} G_j^+(\subset G_i^-)$ for all $1\leq i\leq d+1$. See Figure \ref{Fig:Split}.

Accordingly, we say that a sequence $(X_1,\ldots,X_{d+1})$ of $d+1$ sets in $\reals^d$ is {\it split} by a sequence $(G_1,\ldots,G_{d+1})$ of oriented hyperplanes if every selection $(x_1,\ldots,x_{d+1})\in X_1\times\ldots\times X_{d+1}$ is split by $(G_1,\ldots,G_{d+1})$: each set $X_i$ must lie in the closure of the cone $\bigcap_{j\in [d+1]\setminus \{i\}} G_j^+$.

\medskip
The following property was observed by Karasev, Kyn\v{c}l, Pat\'ak, Pat\'akov\'a, and Tancer \cite[Theorem 12]{BoundsPachs}, and is (re-)established below for the sake of compleness.

\begin{lemma}\label{Lemma:StronglySeparatedSystem}
Let $\sigma=(x_1,\ldots,x_{d+1})$ be a $(d+1)$-sequence in $\reals^d$ that is split by a sequence $(G_1,\ldots,G_{d+1})$ of oriented hyperplanes. Then the open $d$-simplex $\Delta=\bigcap_{i=1}^{d+1}G_i^-$ is contained in $\conv(x_1,\ldots,x_{d+1})$.
\end{lemma}

\begin{figure}
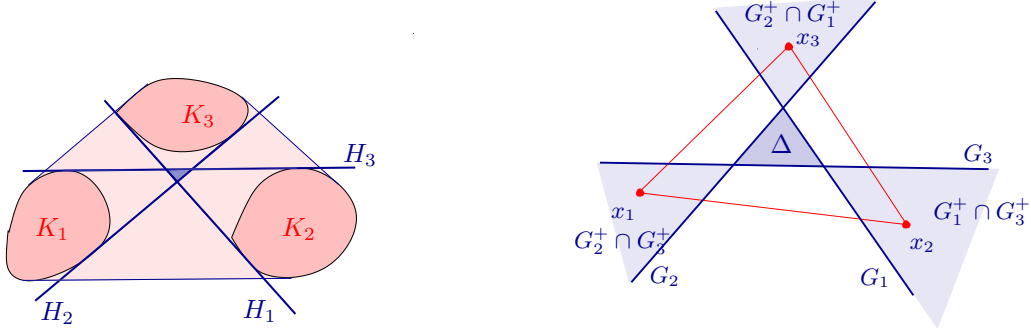

    \begin{center}      
      \input{Loose.pdf_t}\hspace{2cm}  \input{Split.pdf_t}
        \caption{\small Left: The family $\K=\{K_1,K_2,K_3\}$ in $\reals^2$ yields $\Delta(\K)=\bigcap_{i=1}^{d+1} H_i^-=\emptyset$ so that, by Theorem \ref{Theorem:StronglySeparated}, $\K$ is loose. Right: The point sequence $\left(x_1,x_2,x_{3}\right)$ in $\reals^2$ is split by the sequence $\left(G_1,G_2,G_{3}\right)$ of hyperplanes. The $2$-simplex (i.e., triangle) $\Delta=\bigcap_{i=1}^{3}G_i^-$ is depicted.}
        \label{Fig:Split}
    \end{center}
\end{figure}


\begin{proof}[Proof of Lemma \ref{Lemma:StronglySeparatedSystem}]
Clearly, it is necessary and sufficient to establish the claim if every point $x_i$ in the split $(d+1)$-sequence $\sigma$ lies in the {\it open} cone $\bigcap_{j\in [d+1]\setminus \{i\}}G_i^+$.
To this end, we use induction in the dimension $d\geq 1$. 

For $d=1$, we have that $\sigma=(x_1,x_2)$, both hyperplanes $H_i$ in $(G_1,G_2)$ are points separating $x_1$ from $x_2$ and, therefore, piercing the interval $x_1x_2\subset \reals^1$.


Now assume that $d\geq 2$.
Denote the vertices of $\Delta=\bigcap_{i=1}^{d+1} G_i^-$ by $v_1,\ldots,v_{d+1}\in \reals^d$ so that $v_i\not\in G_i$ for all $i\in [d+1]$. Up to a relabeling of the points $x_i$ and the hyperplanes $G_i$, it is sufficient to show that the simplex $\tau=\conv\{x_1,\ldots,x_{d+1}\}$ contains the first $d$ vertices $v_1,\ldots,v_d$.

To this end, we identify the hyperplane $G_{d+1}$ with $\reals^{d-1}$ and, for each $1\leq i\leq d$, denote $x'_i=\conv(x_i,v_{d+1})\cap H_{d+1}$, and use $G'_i$ to denote the hyperplane $G_i\cap G_{d+1}$ whose orientation satisfies $(G')_i^+=G_i^+\cap G_{d+1}$. Since each open oriented segment $x_ix_{d+1}$, for $1\leq i\leq d$, crosses only the hyperplanes $G_{d+1}$ and $G_{i}$, and in this order, it follows that the sequence $\sigma'=(x'_1,\ldots,x'_d)$ too is strongly separated in $\reals^{d-1}$ by the system $(G'_1,\ldots,G'_d)$ which attains the vertices $v_i=\bigcap_{j\in [d]\setminus \{i\}} G'_j$. Thus, the induction assumption for dimension $d-1$ readily implies that $\{v_1,\ldots,v_d\}\subseteq \conv(x'_1,\ldots,x'_d)\subseteq \conv(x_1,\ldots,x_{d+1})$.
\end{proof}


\begin{proof}[Proof of Theorem \ref{Theorem:StronglySeparated}] Let us denote $\K_i:=\K\setminus \{K_i\}$, $H_i:=H(\K_i,\{K_i\})$, and $C_i:=C(\K_i)$ for all $1\leq i\leq d+1$, so that $\Delta(\K)=\bigcap_{i=1}^{d+1}H^-_i$.

To see that (ii) yields (i), suppose that $\Delta(\K)$ is a non-empty open $d$-dimensional simplex. Lemma \ref{Lemma:ArrangementSimplex} implies, then, that $\bigcap_{i=1}^{d+1} \overline{H_i^+}=\emptyset$ (using that the hyperplanes $H_1,\ldots,H_{d+1}$ are clearly in general position). Since $C_i\subset \overline{H_i^+}$ for all $i\in [d+1]$, it follows that $\bigcap_{i=1}^{d+1}C_i=\emptyset$. Thus, the family $\K$ is tight.

To see that (i) yields (iii), suppose that the family $\K$ is tight, that is, $\bigcap_{i=1}^{d+1} C_i=\emptyset$. 
Since each convex set $C_i$, for $1\leq i\leq d+1$ is the common intersection of all the (closed) halfspaces that contain it, Helly's Theorem yields $d+1$ oriented hyperplanes $G_i$, and $d+1$ indices $j_i\in [d+1]$, for $1\leq i\leq d+1$, so that
\begin{enumerate}
	\item  each closed halfspace $\overline{G_i^+}$ contains the set $C_{j_i}$, and
	\item  $\bigcap \overline{G_i^+}=\emptyset$. 
\end{enumerate}
	 
	  As the intersection of any $d$ among sets $C_i$, for $1\leq i\leq d+1$, is non-empty (for it contains the set $K_j$ that is labeled by the missing index $j$), the $d+1$ indices $j_i$, for $1\leq i\leq d+1$, must be distinct. Thus, it can be assumed up to relabeling that $j_i=i$ for all $1\leq i\leq d+1$.
	  
	  Since $\bigcap_{i=1}^{d+1}\overline{G_i^+}=\emptyset$, Lemma \ref{Lemma:ArrangementSimplex} implies that $\Delta=\bigcap_{i=1}^{d+1}G_i^-$ is a non-empty open simplex. Furthermore, since each subset $K_i$ is contained in 
	  $$
	  \bigcap_{j\in [d+1]\setminus \{i\}}C_j\subset\bigcap_{j\in [d+1]\setminus \{i\}}\overline{G^+_j}
	  $$ 
	  \noindent for each $1\leq i\leq d+1$, the set sequence $(K_1,\ldots,K_{d+1})$ is split by the sequence $(G_1,\ldots,G_{d+1})$.
Thus, Lemma \ref{Lemma:StronglySeparatedSystem} yields that 
	$$
	\emptyset\neq \Delta=\bigcap_{i=1}^{d+1} G_i^-\subseteq \bigcap_{x_1\in K_1,\ldots,x_{d+1}\in K_{d+1}}\conv(x_1,\ldots,x_{d+1}).
	$$

\medskip
To see that (iii) yields (ii), suppose that the family $\K=\{K_1,\ldots,K_{d+1}\}$ yields the intersection

$$
\bigcap_{x_1\in K_1,\ldots,x_{d+1}\in K_{d+1}}\conv(x_1,\ldots,x_{d+1}).
$$

\noindent whose interior is non-empty. As each inner hyperplane $H_i$, for $1\leq i\leq d+1$, meets each convex hull $\conv(K_j)$, with $j\leq [d+1]\setminus \{i\}$, at a point of $K_j$, the closed halfspace $\overline{H_i^-}$, which contains $K_i$, must contain 
at least one simplex of the form $\conv\left(x_1,\ldots,x_{d+1}\right)$, with $x_j\in K_j$ for all $j\in [d+1]$. It, therefore, follows that

\begin{equation}\label{Eq:Delta1}
\bigcap_{x_1\in K_1,\ldots,x_{d+1}\in K_{d+1}}\conv(x_1,\ldots,x_{d+1})\subseteq \overline{\Delta(\K)}=\bigcap_{i=1}^{d+1}\overline{H_i^-}.
\end{equation}

It suffices to show that $\overline{\Delta(\K)}$ is a bounded simplex that is equal to
$\bigcap_{x_1\in K_1,\ldots,x_{d+1}\in K_{d+1}}\conv(x_1,\ldots,x_{d+1})$. To this end, we fix any $(d+1)$-tuple $\left(y_1,\ldots,y_{d+1}\right)\in  K_1\times \ldots \times K_{d+1}$ and observe that 

$$
\emptyset \neq \bigcap_{x_1\in K_1,\ldots,x_{d+1}\in K_{d+1}}\conv(x_1,\ldots,x_{d+1})\subseteq \overline{\Delta(\K)}\cap \conv(y_1,\ldots,y_{d+1}).
$$

\noindent Note that $\Delta(\K)$ cannot intersect any the sides $\conv\left(y_j\mid j\in [d+1]\setminus \{i\}\right)$, with $1\leq i\leq d+1$, which lie in the respective halfspaces $\overline{H_j^+}$. Hence, we have that 
\begin{equation}\label{Eq:Delta2}
	\overline{\Delta(\K)}\subseteq \conv\left(y_1,\ldots,y_{d+1}\right), 
\end{equation}
and, in particular, $\Delta(K)$ is bounded. 
Therefore, Lemma \ref{Lemma:ArrangementSimplex} again implies that $\Delta(\K)$ is an open simplex, so that condition (ii) holds. 

Furthermore, since the choice of $y_1\in K_1,\ldots,y_{d+1}\in K_{d+1}$ was arbitrary, the combination of (\ref{Eq:Delta1}) and (\ref{Eq:Delta2}) implies that
$$
\overline{\Delta(\K)}=\bigcap_{x_1\in K_1,\ldots,x_{d+1}\in K_{d+1}}\conv(x_1,\ldots,x_{d+1}).
$$
\end{proof}


\subsection{Matou\v{s}ek's Simplicial Partition Theorem}\label{Subsec:Simplicial}

\medskip
\noindent{\bf Definition.} Let $P$ be a set of $n$ points in general position in $\reals^d$, and $r>0$ be an integer. A {\it simplicial $r$-partition $\Pi$} of $P$ is a collection $\{(P_i,\Delta_i)\mid 1\leq i\leq r\}$ of $r$ pairs, where for each $1\leq i\leq r$ we have that $P_i\subset P$ and $\Delta_i$ is a simplex of dimension at most $d$ in $\reals^d$, so that the following properties are satisfied:\footnote{For the sake of brevity, we require that each partition encompasses exactly $r$ sets $P_i$, for $1\leq i\leq r$, some of which can be empty. For each empty set $P_i$ we introduce an arbitrary small simplex so that no hyperplane in $\reals^d$ crosses more than $d$ such dummy simplices. 
} 

\begin{enumerate}
\item $P=\biguplus_{i=1}^r P_i$.

\item For each $1\leq i\leq r$ so that $P_i\neq \emptyset$, the cardinality $n_i:=|P_i|$ of $P_i$ satisfies
$
 \lceil n/r\rceil \leq n_i<2\lceil n/r\rceil.
$
 
\item For each $1\leq i\leq r$, the set $P_i$ is contained in the relative interior of $\Delta_i$.
\end{enumerate}

\medskip
\noindent{\bf Definition.} For each set $P_i$ in an $r$-partition $\Pi=\{(P_i,\Delta_i)\mid 1\leq i\leq r\}$ of $P$, and each point $p\in P_i$, we refer to $\Delta_i$ as the {\it ambient simplex} of $p$, and denote it by $\Delta(p)$.
\footnote{Notice that the simplices in the partition $\Pi$ need not necessarily be pairwise disjoint, or even cover $\reals^d$. Though a point of $P$ may lie in several simplices $\Delta_i$, it is assigned to a unique ambient simplex, by the means of the partition $P_1\uplus\ldots\uplus P_r$ of $P$.}

For any hyperplane in $\reals^d$, we say that a point $p\in P$ {\it lies in the zone of $H$ within $\Pi$} if its ambient simplex is crossed by $H$.

\begin{theorem}[The Simplicial Partition Theorem \cite{PartitionTrees}]\label{Theorem:Simplicial}
For any $d\geq 2$ there is a constant $c(d)$ with the following property.
For any $n$-point set, and any $1\leq r\leq n$, there is a simplicial $r$-partition $\Pi=\{(P_i,\Delta_i)\mid 1\leq i\leq r\}$ so that any hyperplane crosses at most $c(d)  r^{1-1/d}$ of the simplices $\Delta_i$, for $1\leq i\leq r$. 
\end{theorem}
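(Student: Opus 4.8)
The plan is to build the partition recursively using a cutting-based divide-and-conquer, following Matou\v{s}ek's original argument. First I would recall that it suffices to prove the weaker statement where each $P_i$ has size \emph{exactly} in the range $[\lceil n/r\rceil, 2\lceil n/r\rceil)$: one builds the sets $P_i$ one at a time, each of size roughly $n/r$, peeling them off a shrinking ``remaining'' point set, and the last few sets absorb the rounding slack. So the core task reduces to the following single-step claim: given an $m$-point set $Q$ (the current remainder) and a target size $q \le m$, one can find a subset $P^* \subseteq Q$ with $|P^*| = q$ and a simplex $\Delta^*$ with $P^* \subset \mathrm{relint}(\Delta^*)$, such that the simplex $\Delta^*$ is ``cheap'' in the sense that it contributes the right amount to the hyperplane-crossing budget. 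The crossing bound over the whole partition is then obtained by summing a geometric-type series over the recursion.

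The key tool for the single step is the \emph{Cutting Lemma} (the $\eps$-cutting / $(1/r)$-cutting theorem) applied in the dual: for the dual hyperplane arrangement of $Q$ and a parameter to be chosen, there is a partition of $\reals^d$ into $O(\rho^d)$ simplices each meeting at most $m/\rho$ of the dual hyperplanes, i.e.\ each primal simplex is a ``$(1/\rho)$-cell'' containing at most $m/\rho$ points of $Q$ below/above any query hyperplane... more precisely I would use the test-set / $\eps$-net duality: pick a simplex from the cutting that contains at least $q$ points of $Q$ (one must exist if $\rho$ is chosen as a small constant times $(m/q)^{1/d}$, by averaging, since the $O(\rho^d)$ cells cover all $m$ points), take $P^*$ to be any $q$ of those points, and let $\Delta^*$ be a slight shrinking of that cell so the points lie in its relative interior (here one first brings $Q$ into general position, or notes the theorem already assumes general position, so degenerate lower-dimensional cells can be handled by the footnote's dummy-simplex convention). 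The crucial quantitative point is that a hyperplane $H$ crosses $\Delta^*$ only if $H$ crosses the corresponding cutting cell, and the cutting guarantees that $H$ meets only $O(\rho^{d-1})$ of the $O(\rho^d)$ cells.

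To get the final $O(r^{1-1/d})$ bound I would organize the recursion by levels. At the top level we run the cutting once with $\rho_0 \approx$ const, split off the first batch of sets, recurse on the $O(\rho_0^{d})$ subproblems each of size $\approx n/\rho_0^{d}$... actually the cleanest bookkeeping is Matou\v{s}ek's: maintain the invariant that after producing $j$ sets the remainder has size $\le n(1 - j/r)$, and charge each hyperplane's crossings against the sets it has already ``passed through.'' A fixed hyperplane $H$ crosses, among the first $2^i$-th batch of cells, only an $O(\rho^{-1}) = O((q/m)^{1/d})$ fraction; summing the geometric series $\sum_i (\text{batch size})^{1-1/d}$ over all $O(\log r)$ halving stages (or directly over the $r$ sets grouped dyadically by the remainder size) yields $\sum$ dominated by its largest term, which is $O(r^{1-1/d})$. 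The main obstacle, and the step that needs the most care, is exactly this amortized crossing analysis: one must verify that the per-stage crossing counts form a convergent-enough series so that the total is $O(r^{1-1/d})$ rather than $O(r^{1-1/d}\log r)$ — this is handled by choosing the stage sizes to decrease geometrically and using $\sum_i (r/2^i)^{1-1/d} = O(r^{1-1/d})$, and by making sure the constant in the cutting lemma does not compound across the $O(\log r)$ levels, which is why the recursion must halve the \emph{remainder} each time rather than split into a constant number of equal pieces.
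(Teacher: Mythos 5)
The paper does not prove this statement at all --- it is quoted verbatim from Matou\v{s}ek's ``Efficient partition trees'' \cite{PartitionTrees} --- so your proposal has to be judged against the known proof, and as written it has a genuine gap at its quantitative core. First, the single-step claim is not set up correctly: the Cutting Lemma cuts an \emph{arrangement of hyperplanes}, and in the partition theorem no hyperplanes are given --- the arrangement that gets cut in Matou\v{s}ek's argument is a finite \emph{test set} of hyperplanes (of size polynomial in $r$), together with a separate argument that bounding crossings for the test set bounds them, up to an additive $O(r^{1-1/d})$, for all hyperplanes. Your ``cutting of the dual arrangement of $Q$'' conflates the two spaces: the cells of that cutting live in the dual, where the points of $Q$ are hyperplanes, so ``a cell containing at least $q$ points of $Q$'' and ``a slight shrinking of that cell'' do not produce a primal simplex enclosing $q$ points of $Q$, and the test-set ingredient is never actually constructed.

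Second, and more fundamentally, the amortized crossing analysis does not go through. The bound ``$H$ crosses only an $O(1/\rho)$ fraction of the cells'' is a statement about \emph{all} cells of a \emph{single} cutting; but in the peeling scheme each part uses one chosen cell from a fresh cutting of the current remainder, and nothing prevents a fixed adversarial hyperplane from crossing the chosen cell at a constant fraction of all $r$ steps --- the greedy choice of cell never looks at which hyperplanes have already accumulated many crossings. Summing your geometric series therefore only controls an average or expected crossing number, not the worst case over hyperplanes, and the naive hierarchical variant (constant-size cuttings applied recursively) is known to lose a factor $r^{\delta}$ or polylog, which is exactly why the clean $O(r^{1-1/d})$ bound was nontrivial. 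Matou\v{s}ek closes this gap with an iterative reweighting (multiplicative weight update) over the test set $H$: at each of the $r$ steps one finds, inside a cutting of the \emph{weighted} arrangement, a cell with at least $q$ remaining points crossed by at most an $O((q/m_i)^{1/d})$ fraction of the total weight, then doubles the weight of the crossing hyperplanes; since a hyperplane crossing $k$ chosen simplices ends with weight $2^{k}$ while the total weight is at most $|H|\exp\left(O\!\left(\sum_i (q/m_i)^{1/d}\right)\right)=|H|\,e^{O(r^{1-1/d})}$, every test hyperplane satisfies $k=O\left(r^{1-1/d}+\log |H|\right)=O\left(r^{1-1/d}\right)$. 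Some mechanism of this kind (or the more involved bootstrapping of Chan's optimal partition trees) is indispensable, and it is absent from your proposal.
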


If the points of the underlying set $P$ are in a general position, then we can assume that all the simplices $\Delta_i$ in Theorem \ref{Theorem:Simplicial}, are $d$-dimensional; furthermore, their vertices can be perturbed in a general position with respect to one another, and with respect to the point set $P$.

\medskip
\noindent{\bf Definition.} For any $d\geq 2$, any point set $P\subset \reals^{d}$, and any $r>0$, we fix a unique $r$-partition $\Pi=\Pi_d(P,r)$ that meets the criteria of Theorem \ref{Theorem:Simplicial}. We then use $\Sigma_d(P,r)$ to denote the family $\{\Delta_1,\ldots,\Delta_r\}$ of the $r$ simplices that enclose the subsets $P_i$, for $1\leq i\leq r$.

Finally, for each subset $A=\{p_1,\ldots,p_{d+1}\}\in {P\choose d+1}$, we refer to the family 
$$
\Sigma_A=\{\Delta(p_1),\ldots,\Delta(p_{d+1})\}\subseteq \Sigma_d(P,r)
$$ 
\noindent of the ambient simplices $\Delta(p_i)$ of the points $p_i\in A$, as {\it the ambient family of $A$ in $\Sigma_d(P,r)$}.

\medskip
\noindent{\bf Definition.} We say that the subset $A\in {P\choose d+1}$ is {\it crowded in $\Pi_d(P,r)$} if its ambient family $\Sigma_A\subseteq \Sigma_d(P,r)$ has cardinality at most $d$, in which case some pair of its points $p_i,p_j\in A$, with $1\leq i\neq j\leq d+1$, must fall into the same subset $P_i$ of $\Pi_d(P,r)$. Otherwise, we say that $A$ is {\it split} in $\Pi_d(P,r)$.


%

 \subsection{Semi-algebraic sets and hypergraphs}\label{Subsec:SemiAlgebraic}

Though the hypergraph $(P,E)$ in Theorem \ref{Thm:MainMain} is {\it not} necessarily semi-algebraic, an essential ingredient of its proof is Theorem \ref{Theorem:NewTuran}, which yields a huge complete $k$-partite sub-hypergraph in any sufficiently dense $k$-uniform hypergraph of bounded semi-algebraic description complexity in $\reals^{d}$, for all fixed and positive integers $k$ and $d$. To facilitate the proof of Theorem \ref{Thm:MainMain} in Section \ref{Sec:Main}, as well as the proof of Theorem \ref{Theorem:NewTuran} in Section \ref{Sec:Polynomial}, let us lay down a more comprehensive framework for describing semi-algebraic sets and semi-algebraic hypergraphs.

\medskip
\noindent{\bf Definition.} Let $d$ and $k$ be positive integers.

\begin{enumerate}
\item  A {\it real $d$-variate polynomial $f:\reals^{d}\rightarrow \reals$}, in real variables $x_1,\ldots,x_{d}$, is a function of the form
$$
f(x_1,\ldots,x_{d})=\sum_{i_1,\ldots,i_{d}\in {\mathbb N}}a_{i_1,\ldots,i_{d}}x^{i_1}\cdot \ldots \cdot x^{i_{d}},
$$
\medskip
with real coefficients $a_{i_1,\ldots,i_{d}}$.

In the sequel, we use $\reals[x_1,\ldots,x_{d}]$ denote the space of all such real polynomials.

The {\it degree} of $f\in \reals[x_1,\ldots,x_{d}]$ is $deg(f)=\max\left\{\sum_{j=1}^{d}i_j\mid  a_{i_1,\ldots,i_{d}}\neq 0\right\}$.
Thus, the real polynomials $f:\reals^{d}\rightarrow \reals$ with $deg(f)\leq D$ comprise a vector space of dimension ${d+D\choose d}$ -- the number of possible monomials $x_1^{i_1}\ldots x_{d}^{i_{d}}$ with $0\leq i_1+\ldots +i_{d}\leq D$.

	\item A {\it semi-algebraic description $(f_1,\ldots,f_s;\Phi)$} within $\reals^{d}$ is comprised of a finite sequence $f_1,\ldots,f_s\in \reals[x_1,\ldots,x_{d}]$ of real polynomials, and a Boolean formula $\Phi$ in $s$ variables (where $s$ is also the number of the real polynomials in the sequence). 
	 The {\it complexity} of this description is the pair $(D,s)$, where $D=\max\{deg(f_i)\mid 1\leq i\leq s\}$.

\item A subset $A\subseteq \reals^{d}$ {\it has semi-algebraic  description $(f_1,\ldots,f_s;\Phi)$} if we have that $A=\{x\in \reals^{d}\mid \Phi(f_1(x)\leq 0,\ldots,f_s(x)\leq 0)\}$.

 In what follows, we use $\Gamma_{d,D,s}$ to denote the family of the subsets of $\reals^d$ that admit a semi-algebraic description whose complexity is at most $(D,s)$.

\item Let $(f_1,\ldots,f_s;\Phi)$ be a semi-algebraic description within $\reals^{d\times k}$. We say that a $k$-partite $k$-uniform hypergraph $(V_1,\ldots,V_k,E)$ in $\reals^{d}$
 {\it admits the semi-algebraic description $(f_1,\ldots,f_s;\Phi)$} if  $V_i \subset \reals^{d}$ for all $1\leq i\leq k$, and
any $k$-tuple of points $(p_1,\ldots,p_k)\in P_1\times \ldots\times P_k$ (treated as a coordinate vector in $\reals^{d\times k}$) determines a hyperedge $f=\{p_1,\ldots,p_k\}$ if and only if 
$$
\Phi\left(f_1\left(p_1,\ldots ,p_{k}\right)\leq 0;\ldots;f_s\left(p_1,\ldots ,p_{k}\right)\leq 0\right)=1.
$$

In other words, $E$ is cut out (as a subset of $V_1\times \ldots\times V_k$) by the set $Y\subseteq \reals^{d\times k}$ that meets the description $(f_1,\ldots,f_s;\Phi)$.\footnote{To this end, the vertex sets $V_1,\ldots,V_k$ need not be pairwise disjoint.}




\end{enumerate}

Our proof of Theorem \ref{Theorem:NewTuran} will rely on a certain ``semi-algebraic analogue" of Theorem \ref{Theorem:Simplicial} which was established by Matou\v{s}ek and P\'at\'akova \cite{MultiLevel}.
To this end, we say that a semi-algebraic set $X$ is {\it crossed} by another set $Y$ if we have that $X\cap Y\neq \emptyset$ yet $X\not\subseteq Y$.

\begin{theorem}\label{Theorem:MultiLevel}
For every integers $d\geq 1$, $D\geq 0$, and $s\geq 1$, there exist constants $a=a(d)$, $b=b(d)$ and $c=c(d,D,s)$ such that the following holds. 

For any $n$-point set $P\subset \reals^d$ and any parameter $r>1$, there exist numbers $r_1,r_2,\ldots,r_d\in \left[r,r^{a}\right]$, positive integers $t_1,\ldots,t_d\leq br^b$, and a partition
$$
 P=P^*\uplus\biguplus_{i=1}^d\biguplus_{j=1}^{t_i} P_{i,j},
$$

\noindent along with connected semi-algebraic subsets $X_{i,j}$, for $1\leq i\leq d$ and $1\leq j\leq t_i$, whose respective description complexities are bounded by $\left(br^{b},br^{b}\right)$, so that the following conditions are satisfied:
\begin{enumerate}
    \item For all $1\leq i\leq d$ and $1\leq j\leq t_i$, we have that $P_{i,j}=P\cap X_{i,j}$.
	\item We have that $|P^*|\leq r^{a}$, and $|P_{i,j}|\leq n/r_i$ for all $1\leq i\leq d$ and $1\leq j\leq t_i$.
	\item For any $1\leq i\leq d$, any semi-algebraic set $Y\in \Gamma_{d,D,s}$ crosses at most $c\cdot r_i^{1-1/d}$ among the sets in $\{X_{i,j}\mid 1\leq j\leq t_i\}$.
\end{enumerate}

\end{theorem}

\section{Proof of Theorem \ref{Thm:MainMain}}\label{Sec:Main}

At the heart our proof lie the following properties, of independent interest, of the simplex set $\Sigma_d(P,r)$ of the partition $\Pi_d(P,r)$ in Theorem \ref{Theorem:Simplicial}. 


\begin{theorem} \label{Theorem:Crossed} 
Let $P$ be a set of $n$ points in general position, and $0<r\leq n$ an integer. Then the simplicial partition $\Pi_d(P,r)$ yields only $O\left(r^{d+1-1/d}\right)$ crossed $(d+1)$-size families $\K=\left\{\Delta_{j_1},\ldots,\Delta_{j_{d+1}}\right\}\subseteq \Sigma_d(P,r)$, so that $1\leq i_1<i_2<\ldots<i_{d+1}\leq r$.
\end{theorem}

\begin{theorem}\label{Theorem:Loose} 
Let $P$ be a finite point set in general position, and $r$ a positive integer, that satisfy $r\leq n=|P|$. Then the simplicial $r$-partition $\Pi_d(P,r)=\{(P_i,\Delta_i)\mid 1\leq i\leq r\}$ of $P$ yields only $\displaystyle O\left(r^{d+1-\frac{1}{d^4+d}}\right)$ loose families $\{\Delta_{i_1},\ldots,\Delta_{i_{d+1}}\}\subseteq \Sigma_d(P,r)$ of $d+1$ simplices, with $1\leq i_1,\ldots,i_{d+1}\leq r$. 

\end{theorem}

While Theorem \ref{Theorem:Crossed} is derived via a simple charging argument,\footnote{As a matter of fact, an earlier implicit proof of Theorem \ref{Theorem:Crossed} can be found in a previous study of weak $\eps$-nets by the author; see \cite[Lemma 5.2 with $\eps=1$]{Rubin}.} which assigns the crossed $(d+1)$-tuples to certain $d$-tuples of simplices,
the proof of Theorem \ref{Theorem:Loose} -- the most challenging ingredient in the whole proof Theorem \ref{Thm:MainMain} -- combines the new Tur\'an-type result (Theorem \ref{Theorem:NewTuran}) and Karasev's selection theorem (Theorem \ref{Theorem:Karasev}). 
Put together, Theorems \ref{Theorem:Crossed} and \ref{Theorem:Loose} yield the immediate conclusion.

\begin{corollary}\label{Corol:BoundBadSubsets}
Let $P$ be a set of $n$ points in general position in $\reals^d$, and $0<r\leq n$ an integer. Then 
there exist only $O\left(n^{d+1}/r\right)$ subsets $A\in {P\choose d+1}$ that are crowded in $\Pi_d(P,r)$.
Furthermore, there exist a total of $O\left(n^{d+1}/r^{\frac{1}{d^4+d}}\right)$ split subsets $A\in {P\choose d+1}$ whose ambient  $(d+1)$-size families $\Sigma_A$ within $\Sigma_d(P,r)$ are either crossed or loose.
\end{corollary}

\begin{proof}[Proof of Corollary \ref{Corol:BoundBadSubsets}.] 
By the definition of a crowded subset in Section \ref{Subsec:Simplicial}, the number of such subsets $A\in {P\choose d+1}$ with respect to the partition $\Pi_d(P,r)$ is 
$$
O\left(r^d\left(\frac{n}{r}\right)^{d+1}\right)=O\left(n^{d+1}/r\right).
$$

Furthermore, according to Theorems \ref{Theorem:Crossed} and \ref{Theorem:Loose}, the overall number of the split subsets $A\in {P\choose d+1}$ of the latter two types is 
$$
O\left(r^{d+1-1/d}\left(\frac{n}{r}\right)^{d+1}+r^{d+1-\frac{1}{d^4+d}}\cdot \left(\frac{n}{r}\right)^{d+1}\right)
=O\left(\frac{n^{d+1}}{r^{\frac{1}{d^4+d}}}\right).
$$
\end{proof}

Before proving Theorems \ref{Theorem:Crossed} and \ref{Theorem:Loose}, let us demonstrate how their combination yields, via Corollary \ref{Corol:BoundBadSubsets}, an immediate and dramatic improvement in the selection exponent $\beta_d$; an additional factor of $\eps$ will be ``shaved off" in Section \ref{Subsec:ImprovedRecurrence} through recurrence in the density $|E|/{n\choose d+1}$.

\begin{theorem}\label{Theorem:EasyBound}
	Let $d\geq 2$. Then we have that $F_d(n,\eps)=\Omega\left(\eps^{(d^4+d)(d+1)+1}n^{d+1}\right)$ for any integer $n>0$ and any $\eps>0$.
\end{theorem}
\begin{proof}
Let $(P,E)$ be a $(d+1)$-uniform simplicial hypergraph in $\reals^d$ with $|E|\geq \eps {n\choose d+1}$. 

Denote $\alpha=d^4+d$. Let $r:=\left\lceil c\eps^{-\alpha}\right\rceil$ where $c>0$ is a suitably large constant that may depend on the dimension $d$, and consider the $r$-partition $\Pi_d(P,r)$. Let $E_1\subseteq E$ denote the subset of all the hyperedges that are crowded in $\Pi_d(P,r)$, and let $E_2\subseteq E$ denote the subset of all the split hyperedges $\tau=\{p_1,\ldots,p_{d+1}\}\in E$ whose ambient families $\Sigma_\tau=\left\{\Delta(p_1),\ldots,\Delta(p_{d+1})\right\}\subseteq \Sigma_d(P,r)$ are either crossed or loose.

By Corollary \ref{Corol:BoundBadSubsets}, we have that
$$
|E_1|+|E_2|=O\left(\frac{n^{d+1}}{r^{\frac{1}{d^4+d}}}\right)=O\left(\frac{n^{d+1}}{r^{1/\alpha}}\right).
$$

\noindent Thus, fixing a large enough constant $c$ guarantees that the cardinality of $E_1\cup E_2$ is smaller than $\eps{n \choose d+1}/2$, so there must remain at least $\eps{n\choose d+1}/2$ hyperedges $\tau=\{p_1,\ldots,p_{d+1}\}$ within $E=E\setminus \left(E_1\cup E_2\right)$ whose vertices determine a tight $(d+1)$-size family $\Sigma_\tau=\left\{\Delta\left(p_1\right),\ldots,\Delta\left(p_{d+1}\right)\right\}\subseteq \Sigma_d(P,r)$.

By the pigeonhole principle, there is a tight $(d+1)$-size family $\K=\left\{\Delta_{i_1},\ldots,\Delta_{i_{d+1}}\right\}$, with $1\leq i_1<\ldots<i_{d+1}\leq r$, so that at least 
$\eps {n\choose d+1}/\left(2{r\choose d+1}\right)$ of the hyperedges $\tau=\{p_1,\ldots,p_{d+1}\}\in E$ satisfy $\Sigma_\tau=\K$.
By Lemma \ref{Lemma:StronglySeparatedSystem}, each of these simplices $\tau=\conv(p_1,\ldots,p_{d+1})$ must contain the closure of $\Delta(\K)\neq \emptyset$. Thus, {\it any} point $x\in \Delta(\K)$ must pierce 
$$
\Omega\left(\frac{\eps n^{d+1}}{r^{d+1}}\right)=\Omega\left(\eps^{\alpha (d+1)+1}n^{d+1}\right)=\Omega\left(\eps^{(d^4+d)(d+1)+1}n^{d+1}\right)
$$ 
\noindent simplices in $E$. 
\end{proof}

	\noindent{\bf Definition.} For any $k$-dimensional simplex $\Delta$ in $\reals^d$, with $0\leq k\leq d$, let $V(\Delta)$
	denote the vertex set of $\Delta$, so that $\Delta=\conv(V(\Delta))$.
	
	For any finite collection $\Sigma$ of simplices of dimension $0\leq k\leq d$, let $V(\Sigma)$ denote the set $\bigcup_{\Delta\in \Sigma}V(\Delta)$ of the at most $(d+1)|\Sigma|$ vertices of these simplices. 

\medskip
The proof of both Theorems \ref{Theorem:Crossed} and \ref{Theorem:Loose} will use the following elementary property of crossed families of $d+1$ simplices which was established in the previous study by the author \cite[Lemma 2.6]{Rubin}; for the sake of completeness we include its proof in Appendix \ref{App:ElementaryRubin}.

\begin{lemma}\label{Lemma:PinHyperplane}
	Let $\{\Delta_1,\ldots,\Delta_{d+1}\}$ be a family of $d+1$ closed $d$-dimensional simplices in $\reals^d$ that are crossed by a single hyperplane. Then there exists such a hyperplane that crosses $\Delta_1,\ldots,\Delta_{d+1}$ and is supported by $d$ distinct vertices, in general position, of $\bigcup_{i=1}^{d+1}V(\Delta_i)$ (which need not necessarily come from distinct simplices).
	\end{lemma}
	
	\begin{figure}
    \begin{center}      
        \input{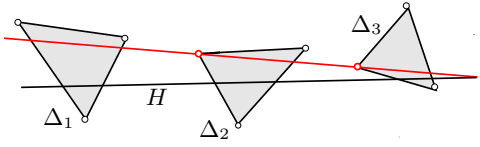}
        \caption{\small Lemma \ref{Lemma:PinHyperplane} in dimension $d=2$: moving the hyperplane $H$ crossing the simplices (i.e., triangles) $\Delta_1,\Delta_2$ and $\Delta_3$ to a position in which it contains a pair of vertices of $V(\{\Delta_1,\Delta_2,\Delta_3\})$ while still intersecting each $\Delta_i$. }
        \label{Fig:ExtremalHyperplane}
    \end{center}
\end{figure}



\subsection{Proof of Theorem \ref{Theorem:Crossed}}\label{Subsec:CrossedProof}
For the sake of brevity, denote $\Sigma:=\Sigma_d(P,r)=\{\Delta_1,\ldots,\Delta_r\}$.
Since the point set $P$ is in general position, it can be assumed in the sequel that the $(d+1)r$ vertices of $V(\Sigma)$ are too in general position.
Let $\K=\{\Delta_{j_1},\ldots,\Delta_{j_{d+1}}\}$ be a crossed family of $d+1$ simplices within $\{\Delta_1,\ldots,\Delta_r\}$, with $1\leq i_1<i_2<\ldots<i_{d+1}\leq r$. Then its elements $\Delta_{j_i}$ can be crossed by a single hyperplane and,
	 furthermore, Lemma \ref{Lemma:PinHyperplane} yields such a hyperplane $H$ that contains a subset $V_H=H\cap V(\K)$ of $d$ vertices in general position. 
	 
	 For each such non-separated family $\K$ we fix a unique such hyperplane with the subset $V_H\subset V(\K)$, along with a subset $\I_\K\subsetneq \K$ of $d$ simplices so that $V_H\subsetneq V(I_\K)$.\footnote{Notice that $\I_\K$ need not be unique, as the vertices of $V_H$ may come from the boundaries of fewer than $d$ simplices.}
	  
	  As there exist only ${r\choose d}=O\left(r^d\right)$ possible $d$-size sub-sets $\I=\I_\K$ within $\Sigma$, it suffices to show that that any of them is shared by $O\left(r^{1-1/d}\right)$ distinct $(d+1)$-size families $\K$ which satisfy $\I=\I_\K$. 
	   Indeed, for each non-separated $(d+1)$-size family $\K$, the only remaining simplex in $\K\setminus \I_\K$ must be crossed by the hyperplane $H=\aff(V_H)$ through some $d$-size subset $V_H=V\left(\I_\K\right)\cap H$. The bound on $|\{\K\in {\Sigma\choose d+1}\mid \I_\K=\I\}|$ now follows since 1. such a subset $V_H$ can be guessed from within $V\left(I_\K\right)$ in ${d(d+1)\choose d}$ ways, and 2. the hyperplane $\aff\left(V_H\right)$ crosses $O\left(r^{1-1/d}\right)$ simplices in $\Sigma$. $\Box$

\subsection{Proof of Theorem \ref{Theorem:Loose}}
We consider the $(d+1)$-uniform hypergraph whose edges correspond to the loose $(d+1)$-size families $\K=\{\Delta_{i_1},\ldots,\Delta_{i_{d+1}}\}$ within $\{\Delta_1,\ldots,\Delta_r\}$, with $1\leq i_1<\ldots<i_{d+1}\leq r$. 

\bigskip
\noindent{\bf Definition.} Let $\Sigma=\{\Delta_1,\ldots,\Delta_r\}$ be a family of $r$ $d$-simplices in $\reals^d$. 

\begin{enumerate}
\item We use $E^*(\Sigma)$ to denote the collection of all the loose $(d+1)$-size sub-families $\K\subseteq \Sigma$.
		\item We say that a loose family $\K=\{\Delta_{i_1},\ldots,\Delta_{i_{d+1}}\}\in E^*(\Sigma)$ is {\it pinned} by a point $x\in \reals^d$ if $x$ lies in 
		
		\begin{equation}\label{Eq:Union}
			C(\K)=\conv\left(\bigcup \K\right)=\bigcup\{\conv(x_1,\ldots,x_{d+1})\mid x_1\in \Delta_{i_1},\ldots,x_{d+1}\in \Delta_{i_{d+1}}\},
		\end{equation}
						
		\noindent which is the union of all the ``colorful" simplices, so that each of their vertices $x_j$ is chosen from a distinct element $\Delta_{i_j}$ of $\K$.\footnote{The last equality in (\ref{Eq:Union}) stems from the convexity of the elements of $\K$, and is not necessarily true, e.g., for families of finite point sets.}
	\item For each $x\in \reals^d$ we use $E^*(\Sigma,x)$ to denote the subset of all the families $\K\in E^*(\Sigma)$ that are pinned by $x$.
\end{enumerate}

Theorem \ref{Theorem:Loose} is an immediate corollary of the following two statements, which yield an upper and a lower bound on the maximum number of  the loose $(d+1)$-size families $\K\in E^*(\Sigma)$ (with $\Sigma=\Sigma_d(\Sigma,r)$) that can be simultaneously pinned by a point $x\in \reals^d$.

\begin{lemma}\label{Theorem:UpperBound} The following statement holds true for any constant $c>0$.

Let $\Sigma=\{\Delta_1,\ldots,\Delta_r\}$ be a family of $r$ closed $d$-dimensional simplices in $\reals^d$ so that the set $V(\Sigma)$ is comprised of $r(d+1)$ vertices in general position. Suppose that any hyperplane crosses at most $cr^{1-1/d}$ simplices in $\Sigma$.
Then any point $x\in \reals^d$ pins only $O\left(r^{d+1-1/d}\right)$ $(d+1)$-size subsets within $\Sigma$, where the constant of proportionality may depend on $c>0$ and the dimension $d$.
\end{lemma}

\begin{lemma}\label{Theorem:LowerBound}
	  		Let $\Sigma=\{\Delta_1,\ldots,\Delta_r\}$ be a set of $r\geq d+1$ closed $d$-dimensional simplices within $\reals^d$ so that the set $V(\Sigma)$ is comprised of $r(d+1)$ distinct vertices in general position. Suppose that $|E^*(\Sigma)|=\varepsilon {r\choose d+1}$ for some $\varepsilon>0$. Then there is a point $x\in \reals^d$ that pins $\Omega\left(\varepsilon^{d^3+1}r^{d+1}\right)$ families $\K\in {\Sigma\choose d+1}$.
\end{lemma}

To establish Theorem \ref{Theorem:Loose} via Lemmas \ref{Theorem:UpperBound} and \ref{Theorem:LowerBound}, let $\Sigma=\Sigma_d(P,r)$. Suppose with no loss generality that $r\geq d+1$, and that $\left|E^*(\Sigma)\right|=\varepsilon {r\choose d+1}$ for some $\varepsilon>0$.
Let $x^*=\arg\max_{x\in \reals^d} |E^*(\Sigma,x)|$ be the point that pins the largest number of hyperedges in $E^*(\Sigma)$. 
By Lemma \ref{Theorem:LowerBound}, there is a point $x\in \reals^d$ that pins $\Omega\left(\varepsilon^{d^3+1}r^{d+1}\right)$ $(d+1)$-size families $\K\in E^*(\Sigma)$. On the other hand, Lemma \ref{Theorem:UpperBound} implies that $|E^*(\Sigma,x^*)|=O\left(r^{d+1-1/d}\right)$. Combining these two estimates on $|E^*\left(\Sigma,x^*\right)|$ yields $\varepsilon^{d^3+1}r^{d+1}=O\left(r^{d+1-1/d}\right)$, or $\varepsilon=O\left(1/r^{\frac{1}{d^4+d}}\right)$.	 $\Box$

\subsection{Proof of Lemma \ref{Theorem:UpperBound}}

We will use the following elementary property.

\begin{lemma}\label{Claim:HyperplaneThroughPoint}
Let $\K=\{\Delta_1,\ldots,\Delta_{d+1}\}$ be a loose family of  $d+1$ $d$-dimensional simplices, so that $V(\K)$ is comprised of $(d+1)^2$ points in general position, and $x\in \reals^d$ a point in $C(\K)=\bigcup\{\conv(x_1,\ldots,x_{d+1})\mid x_1\in \Delta_{1},\ldots,x_{d+1}\in \Delta_{d+1}\}$. There is a hyperplane through $x$ that crosses at least $d$ elements of $\K$.	See Figure \ref{Fig:Pinned}.
\end{lemma}

\begin{figure}
    \begin{center}      
        \input{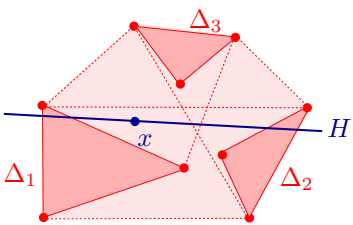}
        \caption{\small The loose family $\K=\{\Delta_1,\ldots,\Delta_{d+1}\}$ of simplices in general position within $\reals^d$ is pinned by a point $x\in C(\K)$. By Lemma \ref{Claim:HyperplaneThroughPoint}, there is a hyperplane $H$ through $x$ that crosses at least $d$ elements of $\K$.}
        \label{Fig:Pinned}
    \end{center}
\end{figure}

\begin{proof}[Proof of Lemma \ref{Claim:HyperplaneThroughPoint}.]
We follow an argument that was previously used by the author to establish a more general statement \cite[Lemma 2.4]{Rubin}, and readily extends to all loose families of $d+1$ compact convex sets in $\reals^d$. A different (and, in our opinion) more instructive proof is supplied in Appendix \ref{App:ElementaryRubin}.
	
	Since $x\in C(\K)$, there must exist $y_1\in \Delta_1,\ldots,y_{d+1}\in \Delta_{d+1}$ so that $x\in \conv\left(y_1,\ldots,y_{d+1}\right)$.
On the other hand, since the family $\K=\{\Delta_1,\ldots,\Delta_{d+1}\}$ is loose, we have that 
$$
\bigcap_{x_1\in \Delta_1,\ldots,x_{d+1}\in \Delta_{d+1}}\conv(x_1,\ldots,x_{d+1})=\emptyset.
$$ 

\noindent Hence, there must also exist $y'_1\in \Delta_1,\ldots,y'_{d+1}\in \Delta_{d+1}$ so that $x\not\in \conv\left(y'_1,\ldots,y'_{d+1}\right)$. 

For each $t\in [0,1]$, and each $1\leq i\leq d+1$, we define the point $y_i(t):=(1-t)y_i+ty'_i$ which clearly lies within $\Delta_i$. Denote $\tau(t):=\conv(y_1(t),\ldots,y_{d+1}(t))$ for all $t\in [0,1]$. 
Since $\K$ is loose, it is, in particular, separated, so that $\tau(t)$ is a $d$-dimensional simplex for each $t\in [0,1]$.
As $\tau(0)=\conv\{y_i\mid 1\leq i\leq d+1\}$  contains $x$, whereas $\tau(1)=\conv\{y'_i\mid 1\leq i\leq d+1\}$ no longer does, there must be $t^*\in [0,1]$ so that $x$ lies on the boundary of $\tau(t^*)$. We can assume with no loss of generality that $x$ lies in the closed $(d-1)$-face $\kappa\left(t^*\right)=\conv\left(y_1(t^*),\ldots,y_{d}(t^*)\right)$ of $\tau\left(t^*\right)$. Then the hyperplane $\aff\left(\kappa\left(t^*\right)\right)$ contains $x$, and also intersects $\Delta_1,\ldots,\Delta_{d}$ at the respective points $y_1(t^*),\ldots,y_{d}(t^*)$.
\end{proof}

\begin{proof}[Back to the proof of Lemma \ref{Theorem:UpperBound}.] It can be assumed, with no loss of generality, that $r=|\Sigma|\geq d+1$.
Fix a point $x\in \reals^d$, and
let $\K=\{\Delta_{i_1},\ldots,\Delta_{i_{d+1}}\}$ be a $(d+1)$-size family that is pinned by $x$, so that $x\in C(\K)=\bigcup\{\conv(x_1,\ldots,x_{d+1})\mid x_1\in \Delta_{i_1},\ldots,x_{d+1}\in \Delta_{i_{d+1}}\}$. According to Lemma \ref{Claim:HyperplaneThroughPoint}, there exists a hyperplane $H$ through $x$ that crosses some $d$ of the simplices in $\K$.

Assume with no loss of generality $H$ crosses the first $d$ sets $\Delta_{i_j}$, with $1\leq j\leq d$.
 Applying Lemma \ref{Lemma:PinHyperplane} to the family $\{\Delta_{i_1},\ldots,\Delta_{i_d},\{x\}\}$ yields a hyperplane that passes through $x$ and some subset $V_\K$ of $d-1$ vertices within $\bigcup_{j=1}^d V(\Delta_{i_j})$ which altogether are in general position. We then label $\K$ with a set $\I_\K\subset \K$ of $d-1$ simplices so that $V_\K\subset V\left(\I_\K\right)$. Notice that there exist at most ${r\choose d-1}=O\left(r^{d-1}\right)$ distinct labels $\I_\K\in {\Sigma\choose d-1}$, and the same label $\I\subseteq \Sigma$ can be shared by at most $O\left(r^{2-1/d}\right)$ $(d+1)$-size families $\K=\{\Delta_{i_1},\ldots,\Delta_{i_{d+1}}\}$ with $\I_\K=\I$, as at least one of the remaining 2 elements $\Delta_{i_j}\in \K\setminus \I_\K$ must be selected from the subset of $O\left(r^{1-1/d}\right)$ simplices that are crossed by a hyperplane $\aff(V_\K\cup \{x\})$ through $x$ and some $(d-1)$-size subset $V_\K\in {V(\I)\choose d-1}$. Thus, the family $\K$ can be selected in only $O\left(r^{d-1}\cdot r^{2-1/d}\right)=O\left(r^{d+1-1/d}\right)$ ways.
\end{proof}

\subsection{Proof of Lemma \ref{Theorem:LowerBound}} \label{Subsec:LowerBound}

The crucial observation is that the $(d+1)$-uniform hypergraph $(\Sigma,E^*(\Sigma))$ has a semi-algebraic representation of bounded complexity within the space $\reals^{d\times (d+1)}$ -- the natural representation space of $d$-simplices as ordered sequences of $d+1$ points. Then the desired point $x\in \reals^d$ pinning $\Omega\left(\varepsilon^{d^3+o(d^3)}r^{d+1}\right)$ $(d+1)$-size families $\K$, will be obtained through a rather straightforward combination of Theorem \ref{Theorem:NewTuran} (say, in dimension $d\times (d+1)$) and Karasev's Theorem \ref{Theorem:Karasev}.

To facilitate the use of our efficient Tur\'an-type result -- Theorem \ref{Theorem:NewTuran}, it is more convenient to work with hypergraphs that are $(d+1)$-partite.
Note that the $(d+1)$-uniform hypergraph $(\Sigma,E^*(\Sigma))$ encompasses a $(d+1)$-partite sub-hypergraph $(\Sigma_1,\ldots,\Sigma_{d+1},E^*(\Sigma_1,\ldots,\Sigma_{d+1}))$, where $E^*(\Sigma_1,\ldots,\Sigma_{d+1})$ is comprised of all the (ordered) loose families $\left(\Delta_{i_1},\ldots,\Delta_{i_{d+1}}\right)\in \Sigma_1\times\ldots\times\Sigma_{d+1}$, so that

$$
E^*(\Sigma_1,\ldots,\Sigma_{d+1})\geq \frac{(d+1)!}{(d+1)^{d+1}}|E^*(\Sigma)|\geq \frac{(d+1)!}{(d+1)^{d+1}}\cdot \varepsilon\cdot {r\choose d+1}.
$$ 

To see this, consider a random assignment of the simplices of $\Sigma$ into $d+1$ pairwise disjoint classes $\Sigma_1,\ldots,\Sigma_{d+1}$, and notice that the simplices in a given hyperedge $\{\Delta_{i_1},\ldots,\Delta_{i_{d+1}}\}\in E$ fall into $d+1$ distinct classes with probability $(d+1)!/(d+1)^{d+1}$. Clearly, each class $\Sigma_i$ in this subdivision would satisfy
$$
|\Sigma_i|\geq \frac{(d+1)!}{(d+1)^{d+1}}|\Sigma|=\frac{(d+1)!}{(d+1)^{d+1}}r.
$$

\medskip
As the exponent in Theorem \ref{Theorem:NewTuran} depends on the ambient dimension of the vertices, it pays to first reduce the representation size of our simplices (which comprise the vertices of our hypergraph).

\medskip
\noindent{\bf Definition.} For any $d$-dimensional simplex $\Delta$, let $\Sigma'(\Delta)$ denote the set of the ${d+1\choose d}=d+1$ closed $(d-1)$-dimensional simplices that comprise the boundary complex of $\Delta$.

\medskip
For each $1\leq i\leq d+1$, let us ``replace" the collection $\Sigma_i$ with the set $\Sigma'_i:=\bigcup_{\Delta\in \Sigma_i}\Sigma'(\Delta)$ of $(d+1)|\Sigma_i|$ $(d-1)$-dimensional simplices.\footnote{Though no pair of simplices in $\Sigma$ share a vertex, each simplex $\Delta\in \Sigma_i$ adds to $\Sigma'_i$ a subset $\Sigma'(\Delta)$ of $d+1$ simplices so that any $2$ of them share $d-1$ vertices, and any $d$ of them share a common vertex.}  Accordingly, instead of $E^*(\Sigma_1,\ldots,\Sigma_{d+1})$ we consider the set $E^*\left(\Sigma'_1,\ldots,\Sigma'_{d+1}\right)$ which is comprised of all the loose families $(\sigma_1,\ldots,\sigma_{d+1})\in \Sigma'_1\times\ldots\times \Sigma'_{d+1}$.

\medskip
The following property guarantees that the resulting hypergraph $\left(\Sigma'_1,\ldots,\Sigma'_{d+1},E^*\left(\Sigma'_1,\ldots,\Sigma'_{d+1}\right)\right)$ encompasses at least $\frac{(d+1)!}{(d+1)^{d+1}}\cdot \varepsilon\cdot {r\choose d+1}$ hyperedges.

\begin{proposition}
Let $\K$ be a $(d+1)$-size loose family $\K=(\Delta_{i_1},\ldots,\Delta_{i_{d+1}})\in E^*(\Sigma_1,\ldots,\Sigma_{d+1})$. Then there is a loose $(d+1)$-size family $\K'=(\sigma_1,\ldots,\sigma_{d+1})$ in $E^*(\Sigma'_1,\ldots,\Sigma'_{d+1})$, so that $\sigma_{j}\in \Sigma'\left(\Delta_{i_j}\right)$ for all $1\leq j\leq d+1$.

\end{proposition}

\begin{proof}
Every simplex $\Delta_{i_j}\in \K$, with $1\leq j\leq d+1$, supports exactly $d$ of the oriented inner tangents $H_k:=H\left(\K\setminus \{\Delta_{i_k}\},\{\Delta_{i_k}\}\right)$, namely, those with $k\in [d+1]\setminus \{j\}$; see Section \ref{Subsec:PrelimTight}. 

Hence, there exist $d$-size subsets $V_1\subset V\left(\Delta_{i_1}\right),\ldots,V_{d+1}\subset V\left(\Delta_{i_{d+1}}\right)$ so that each hyperplane $H_j$ is tangent to every $(d-1)$-simplex $\sigma_k=\conv(V_k)$, with $k\in [d+1]\setminus\{k\}$. Since the vertices of $V(\Sigma)$ are in general position, the families $\K$ and $\K'=\{\sigma_1,\ldots,\sigma_{d+1}\}$ have the same set of oriented inner tangents (namely, $H_1,\ldots,H_{d+1}$, with $H_j^-\supset \Delta_{i_j}\supset \sigma_j$ for all $1\leq j\leq d+1$) which satisfy $\Delta(\K)=\bigcap_{j=1}^{d+1} H_j^-=\emptyset$ according to Theorem \ref{Theorem:StronglySeparated}. The claim now follows by applying Theorem \ref{Theorem:StronglySeparated} to the set $\K'$.
\end{proof}


Though each family $\Sigma'_i$ contains many pairs of simplices that share at least one vertex, an arbitrary small perturbation \cite{GeneralPosition} guarantees that the perturbed set $V\left(\biguplus_{i=1}^{d+1}\Sigma'_i\right)$ encompasses $d\left|\biguplus_{i=1}^{d+1}\Sigma'_i\right|$ vertices in general position. To this end, each closed $(d-1)$-dimensional simplex $\sigma\in \biguplus_{i=1}^{d+1}\Sigma'_i$ is shrunk away from the boundary of its ``parent" $d$-dimensional simplex $\Delta\in \biguplus_{i=1}^{d+1} \Sigma_i$, as each vertex $v$ of $\sigma$ is replaced by a generic point $v'$ in the $\eta$-vicinity of $v$ (in the $L_\infty$-norm) yet within the old $\conv(\sigma)$. By Theorem \ref{Theorem:Loose}, each loose family $\K'=\{\sigma_1,\ldots,\sigma_{d+1}\}\in E^*(\Sigma'_1,\ldots,\Sigma'_{d+1})$ yields a non-empty open simplex $\bigcap_{i=1}^{d+1}H^+_i$, while a suitably small choice of $\eta>0$ guarantees that no loose families in $E^*(\Sigma'_1,\ldots,\Sigma'_{d+1})$ cease to be loose as a result of this shrinking.

\medskip
The following lemma yields a semi-algebraic description of bounded complexity within $\reals^{d^2}$ for the perturbed hypergraph
$(\Sigma'_1,\ldots,\Sigma'_{d+1},E^*(\Sigma'_1,\ldots,\Sigma'_{d+1}))$.

\begin{lemma}\label{Lemma:SemiAlgebraic} 
Let $d\geq 2$, and $\Sigma'_1,\ldots,\Sigma'_{d+1}$ be pairwise disjoint and finite collections of $(d-1)$-simplices within $\reals^d$, so that the vertex set $V(\Sigma')$ of their union $\Sigma':=\biguplus_{i=1}^{d+1}\Sigma'_i$ is comprised of $d|\Sigma'|$ points in general position.\footnote{In particular, no pair of simplices in $\Sigma'$ can share a vertex, and no pair of vertices in $V(\Sigma')$ can share a coordinate.}
Denote the coordinates of a point $x\in \reals^d$ by $(x(1),\ldots,x(d))$, and
suppose that each $(d-1)$-simplex $\sigma=\conv\left\{v_1,\ldots,v_{d}\right\}\in \Sigma'$ is represented by the point\footnote{To make the representation unique, we can require that $v_1(1)<v_2(1)<\ldots<v_{d+1}(1)$.} 
$$
(v_1(1),v_1(2),\ldots,v_1(d);\ldots;v_{d}(1),\ldots,v_{d}(d))\in \reals^{d^2}.
$$

Then the $(d+1)$-partite hypergraph $\left(\Sigma'_1,\ldots,\Sigma'_{d+1},E\left(\Sigma'_1,\ldots,\Sigma'_{d+1}\right)\right)$ in $\reals^{d^2}$ admits a semi-algebraic representation whose complexity is bounded in $d$.
\end{lemma}

\noindent{\bf Proof of Lemma \ref{Theorem:LowerBound} -- wrap up.} Applying Theorem \ref{Theorem:NewTuran} to the $(d+1)$-uniform hypergraph at hand 
$$
(\Sigma'_1,\ldots,\Sigma'_{d+1},E^*(\Sigma'_1,\ldots,\Sigma'_{d+1}))
$$ 
\noindent {within $\reals^{d^2}$}, yields subsets $\Sigma''_1\subseteq \Sigma'_1,\ldots, \Sigma''_{d+1}\subseteq \Sigma'_{d+1}$ that satisfy
$$
|\Sigma''_1|\cdot|\Sigma''_2|\cdot\ldots\cdot |\Sigma''_{d+1}|=\Omega\left(\varepsilon^{d^3+1}\cdot |\Sigma'_1|\cdot|\Sigma'_2|\cdot\ldots\cdot |\Sigma'_{d+1}|\right)=\Omega\left(\varepsilon^{d^3+1}r^{d+1}\right).
$$

	  Let us choose a point $x_\sigma$ in every simplex $\sigma\in \bigcup_{1\leq i\leq d+1}\Sigma''_{d+1}$, and denote $X_i=\left\{x_\sigma\mid \sigma\in \Sigma''_i\right\}$. 
	  According to Theorem \ref{Theorem:Karasev}, there exists a point $x\in \reals^d$ that lies in at least
	  
	  $$
	  \frac{1}{(d+1)!}|X_1|\cdot\ldots\cdot |X_{d+1}|=  \frac{1}{(d+1)!}|\Sigma''_1|\cdot\ldots\cdot|\Sigma''_{d+1}|=\Omega\left(\varepsilon^{d^3+1}r^{d+1}\right)
	  $$ 
	among the ``colorful" simplices $\tau\in X_1\times\ldots\times X_{d+1}$.
	  
	  
Fix a ``colorful" simplex $\tau=\{x_{\sigma_1},\ldots,x_{\sigma_{d+1}}\}$ in $X_1\times \ldots\times X_{d+1}$ that contains $x$. Then each of its vertices $x_{\sigma_i}\in X_i$ belongs to a $(d-1)$-simplex $\sigma_i\in \Sigma''_i$, which lies on the boundary of a unique $d$-simplex $\Delta_{j_i}\in \Sigma_i$. 
Let $\K'=(\sigma_1,\ldots,\sigma_{d+1})$ and $\K=(\Delta_{j_1},\ldots,\Delta_{j_{d+1}})$.
Then we clearly have that $x\in C(\K')\subseteq C(\K)$; hence, the family $\K$ is pinned by $x$.

	  
	   Lastly, note that any pinned family $\K=(\Delta_{j_1},\ldots,\Delta_{j_{d+1}})$ can be ``reached" via at most $(d+1)^{d+1}$ subsidiary loose families $\K'=(\sigma_1,\ldots,\sigma_{d+1})$ in $E^*(\Sigma'_1,\ldots,\Sigma'_{d+1})$.
	  Hence, repeating the above argument for at least $\left(1/(d+1)!\right)|X_1|\cdot \ldots\cdot |X_{d+1}|=|\Sigma''_1|\cdot\ldots\cdot |\Sigma''_{d+1}|$ ``colorful" simplices $\tau\in X_1\times\ldots\times X_{d+1}$ that contain $x$, yields at least
	  
	  $$
	  \frac{|X_1|\cdot\ldots\cdot |X_{d+1}|}{(d+1)!(d+1)^{d+1}}=\Omega\left(|\Sigma''_1|\cdot\ldots\cdot |\Sigma''_{d+1}|\right)=\Omega\left(\varepsilon^{d^3+1}r^{d+1}\right)
	  $$
	  
	  \noindent {\it distinct} loose families $\K=\{\Delta_{j_1},\ldots,\Delta_{j_{d+1}}\}$ in ${\Sigma\choose d+1}$ that are pinned by $x$.
	  $\Box$

\medskip

\begin{proof}[Proof of Lemma \ref{Lemma:SemiAlgebraic}.] 
For any sequence of $d+1$ points $(x_1,\ldots,x_{d+1})\in \reals^{d\times(d+1)}$ we set 

\begin{equation*}
f(x_1,\ldots,x_{d+1}):={\sf det}
\begin{pmatrix}
    1 & 1 & \cdots & 1\\
	x_{1}(1) & x_{2}(1)& \cdots & x_{d+1}(1)\\
	x_{1}(2)& x_{2}(2)& \cdots & x_{d+1}(2)\\
	\vdots & \vdots & \vdots & \vdots\\
	x_{1}(d)& x_{2}(d)& \cdots & x_{d+1}(d)\\
\end{pmatrix}.
\end{equation*}

Notice that a hyperplane $H=\aff(x_1,\ldots,x_{d})$ through $d$ points $x_1,\ldots,x_{d}$ in general position in $\reals^d$ separates a pair of points $y,z\in \reals^d\setminus H$ if and only if the sequences $(x_1,\ldots,x_d,y)$ and $(x_1,\ldots,x_d,z)$ have opposite (and non-zero) orientations \cite{WellSeparation,PolWen,WengerProgress}
$$
\chi(x_1,\ldots,x_d,y)=\sign f(x_1,\ldots,x_d,y)\neq \sign f(x_1,\ldots,x_d,z)=\chi(x_1,\ldots,x_d,z).
$$

Let $\Sigma'=\Sigma'_1\uplus\ldots\uplus\Sigma'_{d+1}$ be a family of $(d-1)$-simplices as prescribed in the lemma. Fix a $(d+1)$-size sub-family $\K=\{\sigma_1,\ldots,\sigma_{d+1}\}$, with $\sigma_i=\conv\{v_{i,1},\ldots,v_{i,d+1}\}\in \Sigma'_i$ for all $1\leq i\leq d+1$.
Since the $|\Sigma|(d+1)$ vertices of $V\left(\Sigma'\right)$ are in general position, Theorem \ref{Theorem:Loose} implies that $\K$ is loose if and only if the following two conditions are satisfied.

\begin{enumerate}
	\item $\K$ is separated, which happens if any only if the sign of $f(x_1,\ldots,x_{d+1})$ is invariant over all choices $(x_1,\ldots,x_{d+1})\in \sigma_1\times\ldots\times \sigma_{d+1}$.  
	\item The set $\Delta(\K)=\bigcap_{i=1}^{d+1} H^-(\K\setminus \{\sigma_i\},\{\sigma_i\})$ is either empty or unbounded, which according to Lemma \ref{Lemma:ArrangementSimplex} is equivalent to the property that $\bigcap_{i=1}^{d+1} H^+(\K\setminus \{\sigma_i\},\{\sigma_i\})\neq \emptyset$.
\end{enumerate}

Since the function $f(x_1,\ldots,x_{d+1})$ is affine in point $x_i\in \reals^d$ of the sequence $(x_1,\ldots,x_{d+1})\in \reals^{d\times (d+1)}$, the first condition is satisfied if and only if the sign of the polynomials\footnote{To this end, we regard each $g_{j_1,\ldots,j_{d+1}}$ as a polynomial in the $d'=d^2\times (d+1)$ coordinates $x_1,\ldots,x_{d'}$ of the sequence $(\sigma_1,\ldots,\sigma_{d+1})$.} $g:\reals^{d^2\times (d+1)}\rightarrow \reals$

$$
g_{j_1,\ldots,j_{d+1}}(\sigma_1,\ldots,\sigma_{d+1}):=f\left(v_{1,j_1},\ldots,v_{d+1,j_{d+1}}\right)
$$ 
is invariant over all the $d^{d+1}$ possible choices of $1\leq j_i\leq d$ for $1\leq i\leq d+1$.
Hence, there is a Boolean function $\Phi_1$ in $d^{d+1}$ variables so that the first condition is equivalent to the combination

$$
\Psi_1(\sigma_1,\ldots,\sigma_{d+1}):=\Phi_1\left(g_{j_1,\ldots,j_{d+1}}(\sigma_1,\ldots,\sigma_{d+1}): 1\leq j_1,\ldots,j_{d+1}\leq d\right)
$$

\noindent of $d^{d+1}$ polynomial inequalities $g_{j_1,\ldots,j_{d+1}}(\sigma_1,\ldots,\sigma_{d+1})\leq 0$, with $1\leq j_1,\ldots,j_{d+1}\leq d$, each of degree $d$.\footnote{Since the vertices of $V(\Sigma')$ are in general position, an equality $f\left(v_{1,i_1},\ldots,v_{d+1,i_{d+1}}\right)=0$ is never attained.}


In other words, the families $\K$ that meet the first condition constitute a semi-algebraic set 
$$
Y_1=\left\{y\in \reals^{d^2\times (d+1)}\mid \Psi_1\left(\sigma_1,\ldots,\sigma_{d+1}\right)\right\},
$$
\noindent whose description complexity is bounded by $\left(d,d^{d+1}\right)$.

For the second property, suppose that the family $\K$ is separated, and denote $H_i:=H(\K\setminus \{\sigma_i\},\{\sigma_i\})$ for all $1\leq i\leq d+1$. 
For any $1\leq i\leq d+1$, each assignment $\phi_i:[d+1]\setminus \{i\}\rightarrow [d]$ determines the unique (non-oriented) hyperplane 
$$
H_{\phi_i}:=\aff\left\{v_{j,\phi_i(j)}\mid j\in [d+1]\setminus \{i\}\right\},
$$ 

\noindent which coincides with $H_i$ if and only if it separates every vertex $v_{i,h}$ of $\sigma_i$ from every vertex $v_{k,l}$ of $\sigma_{k}$, with $k\in [d+1]\setminus \{i\}$ and $l\in [d]\setminus \phi_i(k)$.
Equivalently, the polynomials $g_{i,\phi_i,h},g_{i,\phi_i,k,l}:\reals^{d^2\times (d+1)}\rightarrow \reals$
 $$
g_{i,\phi_i,h}(\sigma_1,\ldots,\sigma_{d+1}):=f\left(v_{1,\phi_i(j)},\ldots,v_{i-1,\phi_i(i-1)},v_{i,h},v_{i+1,\phi_i(i+1)},\ldots,v_{d+1,\phi_i(d+1)}\right)
 $$ and  
 $$
g_{i,\phi_i,k,l}(\sigma_1,\ldots,\sigma_{d+1}):=f\left(v_{1,\phi_i(j)},\ldots,v_{i-1,\phi_i(i-1)},v_{k,l},v_{i+1,\phi_i(i+1)},\ldots,v_{d+1,\phi_i(d+1)}\right)
 $$ 
 must attain opposite signs over $(\sigma_1,\ldots,\sigma_{d+1})\in \reals^{d^2\times (d+1)}$ for all $1\leq h\leq d$, $k\in [d+1]\setminus \{i\}$, and $l\in [d]\setminus \{\phi_i(k)\}$. 
 Therefore, the property that $H_i=H_{\phi_i}$, is determined, for any given $1\leq i\leq d+1$, by a Boolean combination of $d$ inequalities of the form $g_{i,\phi_i,h}(\sigma_1,\ldots,\sigma_{d+1})\leq 0$, and $d(d+1)$ inequalities of the form $g_{i,\phi_i,k,l}(\sigma_1,\ldots,\sigma_{d+1})\leq 0$.
 
 Lastly, let $\phi_1,\ldots,\phi_{d+1}$ be $d+1$ assignments, each of the form $\phi_i:[d+1]\setminus\{i\}\rightarrow [d]$, so that the non-oriented hyperplane $H_{\phi(i)}$ coincides with $H_i$.
Then a point $x\in \reals^d$ lies in $H^+_i$ if and only if both points $x$ and $v_{i,1}$ lie to the same side of $H_{\phi(i)}$. Equivalently, the functions $g_{i,\phi_i,1}:\reals^{d^2\times(d+1)}\rightarrow \reals$ and $z_{i,\phi_i}(\sigma_1,\ldots,\sigma_{d+1};x):\reals^{d^2\times(d+1)+d}\rightarrow \reals$, with
 $$
z_{i,\phi_i}(\sigma_1,\ldots,\sigma_{d+1};x):= f\left(v_{1,\phi_i(1)},\ldots,v_{i-1,\phi_i(i-1)},x,v_{i+1,\phi_i(i+1)},\ldots,v_{d+1,\phi_i(d+1)}\right),
 $$ 
must attain identical signs. 

By repeating the above argument for all $1\leq i\leq d+1$, and all the $d^d$ possible assignments $\phi_i:[d+1]\setminus \{i\}\rightarrow [d]$, the property that $\K=\left\{\sigma_1,\ldots,\sigma_{d+1}\right\}$ is loose can be expressed as
\begin{equation}
	\Psi_1(\sigma_1,\ldots,\sigma_{d+1})\wedge[\exists x\in \reals^d: \Psi_2(\sigma_1,\ldots,\sigma_{d+1};x)].
\end{equation}

\noindent Here
\begin{equation}
	\Psi_2\left(\sigma_1,\ldots,\sigma_{d+1};x\right):=
\end{equation}
\begin{equation*}
\Phi_2\begin{pmatrix}
g_{i,\phi_i,h}\left(\sigma_1,\ldots,\sigma_{d+1}\right)\leq 0: & i\in [d+1], \phi_i:[d+1]\setminus \{i\}\rightarrow [d], h\in [d];\\
g_{i,\phi_i,k,l}\left(\sigma_1,\ldots,\sigma_{d+1}\right)\leq 0: & i\in [d+1], \phi_i:[d+1]\setminus \{i\}\rightarrow [d], k\in [d+1]\setminus \{i\},l\in [d]\setminus\{\phi_i(k)\};\\
z_{i,\phi_i}\left(\sigma_1,\ldots,\sigma_{d+1};x\right)\leq 0: & i\in [d+1], \phi_i:[d+1]\setminus \{i\}\rightarrow [d]
\end{pmatrix}
\end{equation*}

 \noindent is a Boolean combination of $(d+1)d^d\left(d+d(d-1)+1\right)=d^{O(d)}$ polynomial inequalities of the form $g_{i,\phi_i,h}(\sigma_1,\ldots,\sigma_{d+1})\leq 0,g_{i,\phi_i,k,l}(\sigma_1,\ldots,\sigma_{d+1})\leq 0$ or $z_{i,\phi_i}(\sigma_1,\ldots,\sigma_{d+1};x)\leq 0$, each of degree $d$, in the $d^2\times(d+1)$ coordinates that we use to represent the sequence $(\sigma_1,\ldots,\sigma_{d+1})$ of $d+1$ $(d-1)$-dimensional simplices.
 
 \medskip
 By the Tarski-Seidenberg theorem \cite{BasuBook}, the subset 
 $$
Y_2:= \{\left(y_1,\ldots,y_{d+1}\right)\in \reals^{d^2\times (d+1)}\mid \exists x\in \reals^d: \Psi_2(y_1,\ldots,y_{d+1};x)\},
 $$ 
 \noindent which describes the loose families of $d+1$ $(d-1)$-dimensional simplices, is semi-algebraic.\footnote{Using the singly-exponential quantifier elimination \cite[Theorem 2.27]{BasuSurv} (see also \cite[Proposition 2.6]{EstherSearching}), the expression $[\exists x\in \reals^d: \Psi_2(y_1,\ldots,y_{d+1};x)]$ can be replaced by a quantifier-free Boolean combination $\Psi'_2(y_1,\ldots,y_{d+1})$ of $d^{O(d^2)}\cdot d^{O(d^3(d+1))}=d^{O(d^4)}$ polynomial inequalities in the variables $y_1,\ldots,y_{d+1}
 \in \reals^{d^2}$, each of degree at most $d^{O(d)}$.} As a result, the hypergraph $(\Sigma'_1,\ldots,\Sigma'_{d+1},E^*(\Sigma'_1,\ldots,\Sigma'_{d+1}))$, whose edge set $E^*(\Sigma'_1,\ldots,\Sigma'_{d+1})$  is described by $Y_1\cap Y_2$, admits a semi-algebraic description that is bounded in $d$. \end{proof}

	\subsection{A (slightly) better bound for $F_d(n,\eps)$}\label{Subsec:ImprovedRecurrence}

	Denote 
	$$
	G_d(\eps)=\inf_{n\geq d+1}\frac{F_d(n,\eps)}{{n\choose d+1}}.
	$$
	
	\noindent Namely, $G_d(\eps)$ is the largest possible number $0\leq G\leq 1$ with the following property: {\it For any $(d+1)$-uniform simplicial hypergraph $(V,E)$ within $\reals^d$ with $|E|=\eps{n\choose d+1}$ edges, and whose $n\geq d+1$ vertices are in general position, there is a point $x\in \reals^d$ piercing at least $G{n\choose d+1}$ simplices of $E$.}
	
	\medskip
	Let $\delta>0$ be an arbitrary small constant.
	In what follows, we amplify the argument of Theorem \ref{Theorem:EasyBound} so as to show that
	
	$$
	G_d(\eps)=\Omega\left(\eps^{(d^4+d)(d+1)+\delta}\right).
	$$

	Let $n\geq d+1$, and $(P,E)$ be a $(d+1)$-uniform geometric hypergraph in $\reals^d$, where $P\subset \reals^d$ is an $n$-point set, $E\subseteq {P\choose d+1}$, and $|E|\geq \eps {n\choose d+1}$. Similar to the proof of Theorem \ref{Theorem:EasyBound}, we consider the simplicial $r$-partition $\Pi_d(P,r)$ of the vertex set $P$ as described in Theorem \ref{Theorem:Simplicial}. However, now $r=\lceil 1/\eps^\eta\rceil$ is chosen to be arbitrary small, albeit fixed and positive degree of $1/\eps$.
		

	Let $E_1$ denote the subset of all the crowded hyperedges $\{p_1,\ldots,p_{d+1}\}\in E$, and let $E_2$ denote the subset of all the split hyperedges $\tau=\{p_1,\ldots,p_{d+1}\}\in E$ whose ambient families 
	$$
	\Sigma_\tau=\left\{\Delta\left(p_1\right),\ldots,\Delta\left(p_{d+1}\right)\right\}\subseteq \Sigma_d(P,r)
	$$ 
	\noindent are either crossed or loose.
By Corollary \ref{Corol:BoundBadSubsets}, we have that that 

\begin{equation}\label{Eq:BadEdges}
|E_1\cup E_2|=O\left(\frac{n^{d+1}}{r^{\frac{1}{d^4+d}}}\right).
\end{equation}
	
	\noindent However, since $r$ is now a constant, the right-hand side of (\ref{Eq:BadEdges}) can be much larger than the overall cardinality of $E$.
	We thus distinguish between two possible scenarios.
	
	\medskip
	\noindent{\bf Case 1.} If $|E_1\cup E_2|\leq |E|/2$ then there remain at least $|E|/2$ edges $\tau=\{p_1,\ldots,p_{d+1}\}\in E\setminus (E_1\cup E_2)$ that are split in $\Pi_d(P,r)$, and whose respective ambient families $\Sigma_\tau=\left\{\Delta\left(p_1\right),\ldots,\Delta\left(p_{d+1}\right)\right\}$ are tight.
	The pigeonhole principle, there is a subset $E'\subseteq E\setminus (E_1\cup E_2)$ of cardinality 
	$$
	|E'|\geq \frac{|E|}{2{r\choose d+1}}\geq \frac{\eps}{{r\choose d+1}}{n\choose d+1}.
	$$ 
	\noindent and a tight $(d+1)$-family $\K$ of $d+1$ simplices within $\Sigma_d(P,r)$, so that all the hyperedges $\tau\in E\setminus (E_1\cup E_2)$ share $\Sigma_\tau=\K$ as their ambient family.
	By Theorem \ref{Theorem:StronglySeparated}, {\it any} point $x\in \Delta(\K)$ pierces all the edges in $E'$.
	
	\medskip
	\noindent{\bf Case 2.} If $|E_1\cup E_2|\geq |E|/2$ 
	we use the fact that, according to Theorems \ref{Theorem:Crossed} and \ref{Theorem:Loose}, the family $\Sigma_d(P,r)$ encompasses only 
	$$
	O\left(r^d+r^{d+1-1/d}+r^{d+1-\frac{1}{d^4+d}}\right)=O\left(r^{d+1-\frac{1}{d^4+d}}\right)
	$$ 
	\noindent distinct ambient families $\Sigma_\tau\subseteq \Sigma_K(P,r)=\{\Delta_1,\ldots,\Delta_r\}$ of the simplices $\tau\in E_1\cup E_2$. Indeed, each of these families $\Sigma_\tau$ has cardinality smaller than $d+1$ (if $\tau\in E_1$), or else is either crossed or loose (if $\tau\in E_2$).
Hence, another application of the pigeonhole principle yields a subset $E''\subseteq E$, of 
	$$
	\Omega\left(\frac{|E|}{r^{d+1-\frac{1}{d^4+d}}}\right)=	\Omega\left(\frac{\eps {n\choose d+1}}{r^{d+1-\frac{1}{d^4+d}}}\right),
	$$
	\noindent hyperedges $A\in E_1\cup E_2$ that share the same ambient a family $\Sigma_\tau=\K=\{\Delta_{i_1},\ldots,\Delta_{i_k}\}\subseteq \Sigma_d(P,r)$, where $k\leq d+1$ and $1\leq i_1<\ldots<i_k\leq r$.

	Let $P'':=\bigcup_{j=1}^{k}P_{i_j}$, then the induced hypergraph $\left(P'',E''\right)$ contains $\Theta\left(n/r\right)$ vertices
	and
	$$
	\left|E''\right|=\Omega\left(\frac{\eps {n\choose d+1}}{r^{d+1-\frac{1}{d^4+d}}}\right)=\Omega\left(\frac{\eps r^{d+1}|P''|^{d+1}}{r^{d+1-\frac{1}{d^4+d}}}\right)=\Omega\left(\eps r^{\frac{1}{d^4+d}} |P''|^{d+1}\right)
	$$
	\noindent hyperedges. Hence, there must be a point that pierces at least 
	$$
	\Omega\left(\left|P''\right|^{d+1}G_d\left(c\eps\cdot r^{\frac{1}{d^4+d}}\right)\right)=\Omega\left(n^{d+1}\cdot \frac{G_d\left(c\eps\cdot r^{\frac{1}{d^4+d}}\right)}{r^{d+1}}\right)
	$$
	\noindent simplices in $E''\subseteq E$, where $c>0$ is a constant that depends only on the dimension $d$. Since $(P,E)$ is an arbitrary $(d+1)$-uniform geometric hypergraph in $\reals^d$ with density $|E|/{|P|\choose d+1}\geq \eps$, this yields the recurrence 
	\begin{equation}\label{Eq:RecurrenceG}
		G_d(\eps)\geq \min\left\{\frac{\eps}{{r\choose d+1}},\frac{G_d\left(c \eps\cdot r^{\frac{1}{d^4+d}}\right)}{r^{d+1}}\right\}.
	\end{equation}
	
	By substituting $T_d(\eps):=1/G_d(\eps)$, we can rewrite (\ref{Eq:RecurrenceG}) as 
	
	\begin{equation}\label{Eq:RecurrenceT}
		T_d(\eps)\leq \frac{{r\choose d+1}}{\eps}+r^{d+1}\cdot T_d\left(c \eps\cdot r^{\frac{1}{d^4+d}}\right).
	\end{equation}

	The recurrence bottoms out when $\eps$ bypasses a certain constant threshold $0<\eps_0<1$, so that the slightly weaker Theorem \ref{Theorem:EasyBound} yields the bounds $G_d(\eps_0)=\Omega(1)$ and $T_d(\eps)=O(1)$, for all $\eps>\eps_0$. By fixing a sufficiently small constants $\eps_0=\eps_0(\delta,c,d)$ and $\eta=\eta(\delta,c,d)$ in $r=\lceil 1/\eps^\eta\rceil$, and following the standard methodology that is described, e.g., in \cite[Section 7.3.2]{SA}, the 
	recurrence solves to 
	
	\begin{equation}\label{Eq:RecurrenceT}
		T_d(\eps)=O\left(1/\eps^{(d^4+d)(d+1)+\delta}\right)
	\end{equation}

	\noindent so that	$G_d(\eps)=\Omega\left(\eps^{(d^4+d)(d+1)+\delta}\right)$; see, e.g., \cite{MatWag04,Rubin2D,Rubin} for solutions of similar recurrences. $\Box$

 
\section{Proof of Theorem \ref{Theorem:NewTuran}} \label{Sec:Polynomial}

\subsection{A reduction to bi-partite graphs}

Notice that any $k$-uniform $k$-partite hypergraph $(V_1,\ldots,V_k,E)$, with $k\geq 3$, can be represented as a bipartite graph $(V_1,V'_2,E)$, where $V'_2=V_2\times\ldots\times V_k$, so that a pair $(v_1,(v_2,\ldots,v_k))\in V_1\times V'_2$ belongs to $E$ if and only if $(v_1,\ldots,v_k)$ belongs to $E$.

\medskip
\noindent{\bf Definition.} We say that a bi-partite graph $(V_1,V_2,E)$ has semi-algebraic complexity $(D,s)$ in $\left(\reals^{d_1},\reals^{d_2}\right)$ if it meets the following criteria:
\begin{enumerate}
	\item $V_1\subset \reals^{d_1}$, $V_2\subset \reals^{d_2}$, and
	\item There is a subset $Y\subseteq \reals^{d_1+d_2}$ that admits a semi-algebraic description $(f_1,\ldots,f_s,\Phi)$ of complexity $(D,s)$ (with polynomials $f_1,\ldots,f_s\in \reals[x_1,\ldots,x_{d_1},y_1\ldots,y_{d_2}]$ of degree $\deg(f_i)\leq D$ each), so that $E=(V_1\times V_2)\cap Y$.
\end{enumerate}

In the sequel we establish the following auxiliary bi-partite result which is parallel to Lemma 2.3 of Fox, Pach, and Suk \cite{Regularity}.

\begin{lemma} \label{Lemma:BipartiteRegularity}
For any choice of integers $d_1>0$, $d_2>0$, $D>0$ and $s\geq 0$, there exists integers $D'=D'(d_1,d_2,D,s)>0$ and $s'=s'(d_1,d_2,D,s)$ with the following property.

Let $(V_1,V_2,E)$ by a bi-partite graph of semi-algebraic description complexity at most $(D,s)$ in $(\reals^{d_1},\reals^{d_2})$ so that $|E|\geq \varepsilon\cdot |V_1|\cdot |V_2|$. Then there exist subsets $W_1\subseteq V_1,W_2\subseteq V_2$, along with a set $Y\subseteq \reals^{d_2}$ of semi-algebraic description complexity at most $\left(D',s'\right)$, that meet the following criteria:
\begin{enumerate}
	\item  $W_1\times W_2\subseteq E$, 
	\item $|W_1|\cdot |W_2|=\Omega\left(\varepsilon^{d_1+1}\cdot |V_1|\cdot |V_2|\right)$, and
	\item $W_2=V_2\cap Y$.
\end{enumerate}

\noindent Furthermore, we have that

\begin{equation*}
|W_1|=\begin{cases}
\Omega\left(\varepsilon^{d_1}|V_1|\right) &\text{$d_1\geq 2$}\\
\Omega\left(\frac{\varepsilon |V_1|}{\log^2(1/\varepsilon)}\right) &\text{$d_1=1$}
\end{cases}
\end{equation*}

\noindent and  $|W_2|=\Omega\left(\varepsilon |V_2|\right)$.

The constants implicit in our $O(\cdot)$-notation, may depend on $d_1,d_2$, $D$ and $s$.
\end{lemma}

\noindent{\bf Proof of Theorem \ref{Theorem:NewTuran} via Lemma \ref{Lemma:BipartiteRegularity}.} The theorem is established through induction in $k\geq 2$, where the ground case $k=2$ is provided by Lemma \ref{Lemma:BipartiteRegularity} (with $d_1=d_2=d$). Let us assume, then, that $k>2$, and that Theorem \ref{Theorem:NewTuran} holds for all the $(k-1)$-uniform instances.
As was previously described, any $k$-uniform hypergraph $G=\left(V_1,\ldots,V_k,E\right)$ can be ``converted" to the bi-partite graph $G'=(V_1,V'_2=V_2\times\ldots\times V_k,E')$  in $\left(\reals^{d},\reals^{d\times (k-1)}\right)$, with the property that $(v_1,\{v_2,\ldots,v_k\})\in E'$ holds if and only if $(v_1,v_2,\ldots,v_k)\in E$. Notice that $G'$ admits the essentially semi-algebraic description whose complexity is at most $(D,s)$. 

Applying Lemma \ref{Lemma:BipartiteRegularity} to $G'$ yields subsets $W_1\subseteq V_1$ and $W'_2\subseteq V'_2$ with the property $W_1\times W'_2\subseteq E'$, and a semi-algebraic set $Y\subseteq \Gamma_{d(k-1),D',s'}$ within $\reals^{d\times (k-1)}$, with the property that $W'_2=V'_2\cap Y$, such that the following inequalities hold 

\begin{equation}\label{Eq:Bipartite}
|W_1|\cdot |W'_2|=\Omega\left(\varepsilon^{d_1+1}|V_1|\cdot|V'_2|\right),
\end{equation}

\begin{equation*}
|W_1|=\begin{cases}
\Omega\left(\varepsilon^{d_1}|V_1|\right) &\text{$d_1\geq 2$}\\
\Omega\left(\frac{\varepsilon|V_1|}{\log^2(1/\varepsilon)}\right) &\text{$d_1=1$}
\end{cases}
\end{equation*}

\noindent and  $|W'_2|=\Omega\left(\varepsilon |V'_2|\right)$.
Denote
$$
\gamma:=\frac{|W'_2|}{|V'_2|}=\frac{|W'|}{|V_2|\cdot\ldots\cdot |V_k|}=\Omega(\varepsilon).
$$
Notice that $\tilde{G}=(V_2,\ldots,V_k,W'_2)$ is a $\gamma$-dense, $(k-1)$-uniform hypergraph in $\reals^{d}$ that admits a semi-algebraic decription whose complexity is bounded by $\left(D',s'\right)$. Invoking the inductive assumption for $\tilde{G}$ yields subsets $W_2\subseteq V_2,\ldots,W_k\subseteq V_k$ with the property that $W_2\times\ldots\times W_k\subseteq W'_2$ and 
\begin{equation}\label{Eq:Induction}
|W_2|\cdot\ldots\cdot|W_k|=\Omega\left(\gamma^{d(k-2)+1}|V_2|\cdot\ldots\cdot |V_k|\right),
\end{equation}

\noindent and so that

\begin{equation*}
|W_i|=\begin{cases}
\Omega\left(\gamma^{d_1}|V_i|\right) &\text{$d_1\geq 2$}\\
\Omega\left(\frac{\gamma|V_i|}{\log^2(1/\gamma)}\right) &\text{$d_1=1$}
\end{cases}
\end{equation*}
\noindent holds for all $2\leq i\leq k-1$, and $|W_k|=\Omega\left(\gamma|V_k|\right)$. Notice that $W_1,\ldots,W_k$ meet the

To complete the proof of Theorem \ref{Theorem:NewTuran}, notice that

$$
 W_1\times W_2\times\ldots\times W_k\subseteq E
$$

\noindent (as $W_1\times W'_2\subseteq E$ and $W_2\times\ldots\times W_k\subseteq W'_2$). Futhermore, the combination of (\ref{Eq:Bipartite}) and (\ref{Eq:Induction}), together with the fact that $\gamma=\Omega\left(\varepsilon\right)$, implies that

$$
|W_1|\cdot\ldots\cdot |W_k|=\Omega\left(\frac{\varepsilon^{d+1}}{\gamma}|V_1|\cdot \gamma^{d(k-2)+1}|V_2|\cdot\ldots\cdot |V_k|\right)=\Omega\left(\varepsilon^{d(k-1)+1}|V_1|\cdot\ldots\cdot |V_k|\right).
$$

\noindent $\Box$

\subsection{Proof of Lemma \ref{Lemma:BipartiteRegularity}}

To translate the problem to the framework of {geometric range spaces} \cite{HW87},
note that $(V_1,V_2,E)$ determines the hypergraph $(V_2,\E)$ where $\E$ is comprised of the neighborhood sets $\N(v_1)=\{v_2\mid (v_1,v_2)\in E\}$ of the vertices $v_1\in V_1$. (It is possible for a pair of distinct vertices $v_1,v'_1\in V_1$ to yield the same hyperedge $\N(v_1)=\N(v'_1)$ in $\E$.)

Let $(\Phi,f_1,\ldots,f_s)$ be the semi-algebraic description of $(V_1,V_2,E)$, whose complexity is bounded by $(D,s)$. Consider the family $\R=\{R_x\mid x\in \reals^{d_1}\}$ whose elements are the semi-algebraic sets of the form $R_x:=\{y\in \reals^{d_2}\mid \Phi(f_1(x,y),\ldots,f_s(x,y))\}$, which clearly belong to $\Gamma_{d_2,D,s}$. 
Let
 $[\R]_{W}$ denote the restriction $\{R\cap W\mid R\in \R\}$ of $\R$ to a subset $W\subset \reals^{d_2}$.
Then the pair $(\reals^{d_2},\R)$ constitutes a {\it range space} which includes the hypergraph $(V_2,\E)$ (as we have that $\E\subseteq [\R]_{V_2}$).

 More generally, for any $m\in {\mathbb N}$ we use
$\varphi_2(m)$ to denote the maximum cardinality of $[\R]_{W}$ that is taken over all the $m$-size subsets $W\subset \reals^{d_2}$. The resulting function $\varphi_2:{\mathbb N}\rightarrow {\mathbb N}$ is called the {\it shatter function} of $(\reals^{d_2},\R)$.
 
 \begin{lemma}\label{Lemma:Shatter}
  	For any integer $m\geq 1$ we have that $\varphi_2(m)=O\left(m^{d_1}\right)$, where the constants of proportionality depend on $d_1,d_2,D$ and $s$.
 \end{lemma}
 
 \begin{proof}
  Fix an $m$-size subset $W\subseteq \reals^{d_2}$. To bound the cardinality of $[\R]_W$, we consider the family $\F'_W=\{R'_w\mid w\in W\}$, 
  which is comprised of all the {\it dual} semi-algebraic ranges of the form
  $R'_w:=\{x\in \reals^{d_1}\mid  \Phi(f_1(x,w),\ldots,f_s(x,w))\}$, with $w\in W$. The {\it arrangement} of $\F'_W$ \cite{BasuBook,SA} is a decomposition of $\reals^{d_1}$ into maximal connected regions with the property that all the points that lie in the same such region, belong to the same ranges of $\F'_W$. According to a variant of Milnor-Thom Theorem \cite{Warren}, this arrangement $\A\left(\F'_W\right)$
  is comprised of $O\left(m^{d_1}\right)$ faces, with implicit constants of proportionality that depend on $d_1,d_2,D$ and $s$. (To see this, it suffices to observe that every face of $\A(\F'_W)$ is a union of one or more faces in the finer arrangement of the surface family $\{\xi_{w,i}\mid w\in W,1\leq i\leq s\}$, where $\xi_{w,i}:=\{x\in \reals^{d_1}\mid f_i(x,w)=0\}$.) 
 The claim now follows since any two points $x_1,x_2\in \reals^{d_1}$ that lie in the same face of $\A\left(\F'_W\right)$, determine identical subsets $R_{x_1}\cap W=R_{x_2}\cap W$ within $[\R]_W$.
 \end{proof}


 For any $1\leq i\leq \log (10/\varepsilon)$, let $V_{1,i}$ denote the subset of all such vertices $v_1\in V_1$ that satisfy\footnote{For the sake of brevity, we use $\log x$ to denote the quantity $\lceil\log_2 x\rceil$.}
 $$
\frac{2^{i-1}\varepsilon|V_2|}{10} \leq |\Gamma(v_1)|<\frac{2^i \varepsilon |V_2|}{10},
 $$
 
 \noindent and let $\E_{i}$ denote the subset $\{\N(v_1)\mid v_1\in V_{1,i}\}\subseteq \E$.

\begin{lemma}\label{Lemma:DenseVertices}
There is $1\leq i\leq \log (10/\varepsilon)$ so that $|V_{1,i}|=\Omega\left(\frac{|V_1|}{2^ii^2}\right)$.
\end{lemma}
\begin{proof}
 Since the vertices of $V_1\setminus \left(\bigcup_{i=1}^{\log (10/\varepsilon)}V_{1,i}\right)$ account for at most $\varepsilon|V_1||V_2|/10$ edges of $E$, the vertices of $\bigcup_{i=1}^{\log (10/\varepsilon)}V_{1,i}$ must be adjacent to at least $|E|/2\geq \frac{\varepsilon}{2}|V_1|\cdot |V_2|$ edges $(v_1,v_2)\in E$. 
However, if the contrapositive statement holds, then each subset $V_{1,i}$ accounts for at most
 $$
  \frac{|V_{1}|}{2^{i}i^2}\cdot \frac{2^i\varepsilon|V_2|}{10}\leq \varepsilon\cdot \frac{|V_1|\cdot |V_2|}{10i^2}
 $$
such edges, for a total of at most $\displaystyle \sum_{i=1}^{\log (10/\varepsilon)}\frac{\varepsilon |V_1|\cdot |V_2|}{10i^2}\leq \varepsilon |V_1|\cdot |V_2|<|E|/2$ edges.
\end{proof}

Fix $1\leq i\leq \log (10/\varepsilon)$ that meets the criteria of Lemma \ref{Lemma:DenseVertices}. Let $\E_i=\{\N(v_1)\mid v_{1}\in V_{1,i}\}$. Denote $\varepsilon_i=\frac{2^{i-1}\varepsilon}{10}$, so that every hyperedge $\sigma\in \E_i$ satisfies $|\sigma|\geq \varepsilon_i |V_2|$. 

\begin{lemma}\label{Lemma:Mnet}
With the previous choice of $1\leq i\leq \log(10/\varepsilon)$, there exists a collection $\W_i$ of $t=O\left(1/\varepsilon_i^{d_1}\right)$ subsets $\omega\subseteq V_2$ with the following properties:

\begin{enumerate}
\item We have that $|\omega|=\Omega(\varepsilon_i |V_2|)$ for all $\omega\in \W_i$.
\item every edge $\sigma\in \E_{i}$ contains at least one of these subsets $\omega\in \W_i$.
\item  every $\omega\in \W_i$ can be assigned a set $Y_\omega\subseteq \reals^{d_2}$ with the property that $\omega=V_2\cap Y_\omega$, and whose semi-algebraic description complexity is bounded in $d_1,d_2,D$ and $s$.
\end{enumerate}

\end{lemma}
 
 \begin{proof}
  In the notation of Mustafa and Ray \cite{MustafaRay}, any family $\W_i$ of subsets $\omega\subseteq V_2$ that meet the first two criteria, is called an {\it $\varepsilon_i$-Mnet} for the hypergraph $(V_2,\E_{i})$.
Though the Mnet property is not enough to assign a semi-algebraic set $Y_\omega$ to each $\omega\in \W_i$, our construction is overly inspired by the argument of Dutta {\it et al.} \cite{Dutta}, which yields an {$\varepsilon_i$-Mnet} $\W_i$ of cardinality $t=O\left(1/\varepsilon_i^{d_1}\right)$ via the multilevel polynomial partition of Theorem \ref{Theorem:MultiLevel}.

To this end, let us fix the constants $a=a(d_2),b=b(d_2)$ and $c=c(d_2,D,s)$ in accordance with Theorem \ref{Theorem:MultiLevel}.
If $\varepsilon_i\leq 100(100d_2c)^{d_2a}/|V_2|$, then $\W_i=\{\{v\}\mid v\in V_2\}$ is the desired set of cardinality $O(1/\varepsilon_i)$, and each singleton element $\{v\}\in \W_i$ admits a semi-algebraic description whose complexity is bounded by $(1,d_2)$. Otherwise, the family $\W$ is defined in two basic steps.

We first construct a maximal {\it $(\varepsilon_i/10)$-packing $\F_i\subseteq \E_{i}$} for the hypergraph $(V_2,\E_{i})$, that is, a subset $\F_i\subseteq \E_i$ with the following property: for any $\sigma\in \E_{i}$, there is $\rho\in \F_i$ so that $|\rho\triangle \sigma|\leq \varepsilon_i|V_2|/10$. Combining Haussler's Theorem with the upper bound of Lemma \ref{Lemma:Shatter} on the shatter function $\varphi_2$ of the ambient range space $(\reals^{d_2},\R)$, yields that $|\F_i|=O\left(1/\varepsilon_i^{d_1}\right)$.  Note that every edge $\rho\in \F_i$ is ``cut out", as a subset of $V_2$, by a semi-algebraic range $R_\rho\in \R$.
 
 In the second step, we invoke the multilevel polynomial partition of Theorem \ref{Theorem:MultiLevel} for each edge $\rho\in \F_i$. To this end, we fix a suitably large, albeit constant $r=(100d_2c)^{d_2}$, which guarantees that $r^{a}\leq \varepsilon_i|V_2|/100$. Consider the resulting ``secondary" partition 
 $$
 \rho=\omega^*\uplus \biguplus_{j=1}^{d_2} \biguplus_{l=1}^{t_j} \omega_{jl},
 $$ 
 
\noindent where $|\omega^*|\leq r^a$, and $|\omega_{jl}|\leq |\rho|/r_j$ for all $1\leq l\leq t_j$. Recall that we have that $r\leq r_j\leq r^a$ for all $1\leq j\leq d$. Furthermore, each set $\omega_{jl}$ is ``assigned" a semi-algebraic set $X_{\omega_{jl}}\subseteq \reals^{d_2}$ whose semi-algebraic description complexity is bounded by $\left(br^b,br^b\right)$, with the property that $\omega_{jl}=\rho\cap X_{\omega_{jl}}$, and so that any semi-algebraic set $X\in \Gamma_{d_2,D,s}$ crosses at most $cr_j^{1-1/d_2}$ of the sets $X_{\omega_{jl}}$, for $1\leq l\leq t_j$.

Let $\W_{\rho}$ denote the sub-family of all such subsets $\omega_{jl}\subset \rho$ that satisfy
 $$
 |\omega_{jl}|\geq \frac{\varepsilon_i |V_2|}{100bd_2r^{b}}=\Omega(\varepsilon_i |V_2|).
 $$
 
\noindent Since the overall number of sets $\omega_{jl}$ in the partition of $\rho$ does not exceed $d_2\cdot br^{b}$, we have that 
\begin{equation}\label{Eq:ManyPoints}
	\sum_{\omega_{jl}\in \W_\rho} |\omega_{jl}|\geq |\rho|-\varepsilon_i|V_2|/100-r^a\geq |\rho|-\varepsilon_i|V_2|/50.
\end{equation}

\medskip
To obtain the family $\W_i$, we repeat the previous construction for each $\rho\in \F_i$, and denote
$$
\W_i:=\bigcup_{\rho\in \F_i}\W_{\rho}.
$$
 
\noindent Notice that, for any $\rho\in \W_i$, every element $\omega\in \W_\rho$ is contained in the set $Y_\omega:=R_\rho\cap X_{\omega}$ which satisfies $\omega=V_2\cap Y_\omega$, and whose semi-algebraic description complexity is bounded by $(\max\{br^b,D\},br^b+s)$.

To show that any edge in $\E_{i}$ contains one or more subsets $\omega\in \W_i$, let us fix such an edge $\sigma\in \E_{i}$. Since $\F_i$ is a maximal packing in $(V,\E_{i})$, there must exist $\rho\in \F_i$ so that $|\rho\cap \sigma|\geq |\sigma|-|\rho\triangle \sigma|\geq |\sigma|-\varepsilon |V_2|/10$. Together with (\ref{Eq:ManyPoints}), this implies that at least $|\sigma|-\varepsilon_i|V_2|/5\geq 4\varepsilon_i|V_2|/5$ points in $\sigma$ fall in  $\bigcup\W_\rho$. 
Recall that $\sigma$ is determined by a range $R_{\sigma}$ in $\R\subseteq \Gamma_{d_2,D,s}$, with the property that $\sigma=R\cap V_2$.
Since $R$ crosses at most $cr_j^{1-1/d_2}$ of the semi-algebraic sets $X_{\omega}$ that arise in the multi-level polynomial decomposition of $\rho$, it follows that $\sigma$ crosses at most $cr_j^{1-1/d_2}$ of the respective elements $\omega\in \W_\rho$, whose overall cardinality is at most 
$$
\sum_{j=1}^{d_2} cr_j^{1-1/d_2}\cdot \frac{|\rho|}{r_j}\leq cd_2|\rho|/r^{1/d_2}\leq \varepsilon_i|V_2|/10.
$$

\noindent Hence, there remain at least $\varepsilon_i|V_2|/2$ points in $\sigma$ that fall into such subsets $\omega\in \W_\rho$ that are contained in $\sigma$. In particular, $\sigma$ must contain at least {\it one} element of $\W_\rho\subseteq \W_i$.
\end{proof}

\noindent {\bf Proof of Lemma \ref{Lemma:BipartiteRegularity} -- wrap up.} Let $\W_i$ be a family that meets the criteria of Lemma \ref{Lemma:Mnet}. By the pigeonhole principle, there must exist a subset $W_1\subset V_{1,i}$ of cardinality $\Omega\left(\varepsilon_i^{d_1}|V_{1,i}|\right)$ and an element $\omega\in \W_i$ so that $\omega\subseteq \N(v_1)$ holds for all $v_1\in W_1$. Thus, choosing $W_2=\omega$ guarantees that $W_1\times W_2\subseteq E$. 
Furthermore, notice that we have that
$$
|W_1|\cdot |W_2|=\Omega\left(\varepsilon_i^{d_1}|V_{1,i}|\cdot \varepsilon_i |V_2|\right)=\Omega\left(\frac{(2^i\varepsilon)^{d_1+1}|V_1|}{2^ii^2}\right)=\Omega\left(\varepsilon^{d_1+1}|V_1|\cdot |V_2|\right).
$$

To complete the proof of Lemma \ref{Lemma:BipartiteRegularity}, note that we have that $W_2=V_2\cap Y$, where $Y=Y_{\omega}\subseteq \reals^{d_2}$ is a subset whose semi-algebraic description complexity is bounded in the terms of $d_1,d_2,D$ and $s$, and that $|W_2|=\Theta\left(\varepsilon_i|V_2|\right)=\Omega\left(\varepsilon |V_2|\right)$, and we have that

$$
 |W_1|\geq \varepsilon_i^{d_1}|V_{1,i}|=\Omega\left(2^{id_1}\varepsilon^{d_1}\cdot \frac{|V_1|}{2^ii^2}\right),
$$

\noindent where the right hand side is $\Omega\left(\varepsilon^{d_1}|V_1|\right)$ for $d_1>0$ and $\Omega\left(\varepsilon|V_1|/\log^2 (1/\varepsilon)\right)$ for $d_1=1$. $\Box$

\section{Concluding remarks}\label{Section:Concluding}

Let us conclude the paper with several corollary observations and open problems.

\subsection{A strengthening of Pach's theorem}
In view of the characterization in Theorem \ref{Theorem:StronglySeparated}, one can recast Pach's Theorem \ref{Theorem:Pach} in the following form: {any non-empty $d+1$ $n$-point sets $P_1,\ldots,P_{d+1}$ in general position in $\reals^d$ contain subsets $Q_i\subseteq P_i$ of cardinality $|Q_i|\geq c'_d n$, whose respective convex hulls comprise a tight family $\K=\{\conv(Q_1),\ldots,\conv(Q_{d+1})\}$}.

\medskip
An immediate by-product of our machinery is the following strengthening of Pach's Theorem.



 \begin{theorem} \label{Theorem:MultiPach} 
Let $P_1,\ldots,P_{d+1}$ be $d+1$ finite sets, each in general position in $\reals^d$, and $r$ a positive integer that satisfies $r\leq \min\{|P_i|\mid 1\leq i\leq d+1\}$. Then there exist partitions
$P_i=\biguplus_{j=1}^{r_i}P_{i,j}$, for $1\leq i\leq d+1$, each into $r_i\leq r$ parts of cardinality $\lceil |P_i|/r\rceil\leq |P_{i,j}|\leq 2\lceil |P_i|/r\rceil$, with the following property:
All but $O\left(r^{d+1-\frac{1}{d^4+d}}\right)$ of the $(d+1)$-tuples $\left(P_{1,j_1},\ldots,P_{d+1,j_{d+1}}\right)$ are tight.\footnote{We say that a family $\K=\{X_1,\ldots,X_{d+1}\}$ of $d+1$ finite point sets in $\reals^d$ is {\it tight} if $\{\conv(X_1),\ldots,\conv(X_{d+1})\}$ is. }

\end{theorem}


\smallskip
Theorem \ref{Theorem:MultiPach} is an immediate corollary of the following $(d+1)$-partite formulations of Theorems \ref{Theorem:Crossed} and \ref{Theorem:Loose} which involve simplicial $r$-partitions  
$$
\Pi_d(P_1,r)=\{\left(P_{1,j},\Delta_{1,j}\right) \}_{j=1}^{r_1},\ldots,\Pi_d(P_{d+1},r)=\{\left(P_{d+1,j},\Delta_{d+1,j}\right)\}_{j=1}^{r_{d+1}}.
$$ 

\noindent of some $d+1$ sets $P_1,\ldots,P_{d+1}$, whose respective cardinalities satisfy $|P_i|\geq r$.


\begin{theorem} \label{Theorem:PartiteCrossed} 
Let $P_1,\ldots,P_{d+1}$ be $d+1$ finite sets, each in general position in $\reals^d$, and $0<r\leq \min\{|P_i|\mid 1\leq i\leq d+1\}$ an integer. Then the $r$-partitions $\Pi_d(P_1,r),\ldots,\Pi_d(P_{d+1},r)$ 
determine only $O\left(r^{d+1-1/d}\right)$ crossed $(d+1)$-size families $\left(\Delta_{1,j_1},\ldots,\Delta_{d+1,j_{d+1}}\right)\in \Sigma_d(P_1,r)\times\ldots\times \Sigma_d(P_{d+1},r)$. 

\end{theorem}

To ``convert" the proof of Theorem \ref{Theorem:Crossed} to the colorful setting of Theorem \ref{Theorem:PartiteCrossed}, note that any crossed family $\K=\{\Delta_{1,j_1},\ldots,\Delta_{d+1,j_{d+1}}\}$ can still be charged to a ``signature subset" $\I_K\subsetneq \K$ so that $V(\I_K)$ contains a subset $V_H$ of $d$ vertices which support a common hyperplane transversal $H$ to $\K$. The fact that $H$ crosses at most $O\left(r^{1-1/d}\right)$ simplices in each family $\Sigma_d(P_i,r)$, implies that any such signature $\I_K$, of at most $d$ simplices, is shared by at most $O\left(r^{1-1/d}\right)$ crossed families $\K$.

 The proof of Theorem \ref{Theorem:Loose} is modified even more easily, by directly considering the {\it already $(d+1)$-partite} semi-algebraic hypergraph $\left(\Sigma_1,\ldots,\Sigma_{d+1},E^*(\Sigma_1,\ldots,\Sigma_{d+1})\right)$, with $\Sigma_i=\Sigma_d(P_i,r)$ for all $1\leq i\leq d+1$, thus yielding the following property.

\begin{theorem}\label{Theorem:PartiteLoose} 
Let $P_1,\ldots,P_{d+1}$ be $d+1$ finite sets, each in general position in $\reals^d$, and $r$ is a sufficiently large positive integer which satisfies $r\leq \min\{|P_i|\mid 1\leq i\leq d+1\}$. Then the $r$-partitions $\Pi_d(P_1,r),\ldots,\Pi_d(P_{d+1},r)$ 
determine only $O\left(r^{d+1-\frac{1}{d^4+d}}\right)$ loose $(d+1)$-size families $\left(\Delta_{1,j_1},\ldots,\Delta_{d+1,j_{d+1}}\right)\in \Sigma_d(P_1,r)\times\ldots\times \Sigma_d(P_{d+1},r)$.


\end{theorem}

 In particular, applying Theorem \ref{Theorem:MultiPach} with a sufficiently large, albeit constant parameter $r$ yields a somewhat stronger form of Theorem \ref{Theorem:Pach}, which does {\it not} assume that $|P_1|=|P_2|=\ldots=|P_{d+1}|$. Indeed, if $r\leq \min\{|P_i|\mid 1\leq i\leq d+1\}$ then the vast majority of the $(d+1)$-tuples $(P_{1,j_1},\ldots,P_{d+1,j_{d+1}})$ are tight, and their respective cardinalities satisfy $|P_{i,j_i}|=\Theta(|P_i|/r)=\Theta(|P_i|)$. 
Otherwise, if the cardinality of any set $P_i$ is smaller than $r$, we replace its partition $\Pi_d(P_i,r)$ with a collection of $|P_i|$ singleton sets $\{p_{i,j}\}$, for $1\leq j\leq |P_i|$, and set each $\Delta_{i,j}$ to be an infinitesimally small simplex that surrounds $p_{i,j}$. As a result, the pigeonhole argument in Section \ref{Subsec:CrossedProof} still implies that the vast majority of the $(d+1)$-size families $\K=\{\Delta_{1,j_1},\ldots,\Delta_{d+1,j_{d+1}}\}$ are separated. Furthermore, since each of these separated families $\K$ includes a singleton (or, more precisely, a very small simplex) it cannot be loose in view of Theorem \ref{Theorem:StronglySeparated}.


\subsection{An efficient same-type lemma} 

\noindent{\bf Definition.} Let $d\geq 1$ and $k\geq 1$ be integers. The {\it order-type} $\nu(p_1,\ldots,p_k)$ of a $k$-point sequence $(p_1,\ldots,p_k)$ in general position in $\reals^d$ is the vector in $\{+,-\}^{k\choose d+1}$ that describes the orientations $\chi(p_{i_1},\ldots,p_{i_{d+1}})$ of all the  $(d+1)$-point subsequences of the form $(p_{i_1},\ldots,p_{i_{d+1}})$, with $1\leq i_1<i_2<\ldots<i_{d+1}\leq k$. 
 
\medskip
In 1998 B\'ar\'any and Valtr \cite{BaranyValtr} established the following Ramsey-type statement with respect to order types.

\begin{theorem}[Same Type Lemma]\label{Theorem:SameType}
 For any positive integers $d$ and $k$, there is $c_d(k)>0$ with the following property: {\it For any finite point sets $P_1,\ldots,P_k\subset \reals^d$, whose union is in general position, there exist pairwise disjoint subsets $Q_1\subseteq P_1,\ldots,Q_k\subseteq P_k$, each of cardinality
$|Q_i|\geq c_d(k)|P_i|$, so that the order type $\nu(p_1,\ldots,p_k)$ of any $k$-point sequence $(p_1,\ldots,p_k)\in Q_1\times \ldots\times Q_k$ is invariant in the choice of the representatives $p_i\in Q_i$.} 
\end{theorem}

\medskip
 An important corollary of the polynomial Tur\'an-type result of Fox, Pach and Suk \cite{Regularity} was the lower estimate $c_d(k)=2^{-O(d^3k\log k)}$.
For $P_1=P_2=\ldots=P_k=P$, a polynomial bound of the form $c_d(k)=\Omega(m^{a_d})$ has been established by Mirzaei and Suk \cite[Lemma 3.2]{MirzaeiSuk}, where $a_d$ is a very large constant that depends on $d$.  
To obtain a better estimate, let us re-formulate Theorem \ref{Theorem:Crossed} as an ``approximate same-type lemma", which fixes the orientation for all but $O\left(r^{d+1-1/d}\right)$ of the $(d+1)$-tuples.

\begin{corollary}\label{Corol:OrderType}\label{Corol:RegularityOrderType}
Let $P$ be a set of $n$ points in general position in $\reals^d$, and $r\leq n$. Then $P$ admits a partition $P=P_1\uplus \ldots\uplus P_{r'}$, by the means of Matou\v{s}ek's simplicial partition, into $r'=\Theta(r)$ pairwise disjoint subsets, each of size $\lceil n/r\rceil \leq |P_i|\leq 2\lceil n/r\rceil$, and with the following property:

 All but $O\left(r^{d+1-1/d}\right)$ of the $(d+1)$-tuples $(P_{i_1},\ldots,P_{i_{d+1}})$, with $1\leq i_1<i_2<\ldots<i_{d+1}\leq r'$ are homogeneous with respect to the orientation function $\chi$, in the sense that all the $(d+1)$-point sequences $(p_1,\ldots,p_{d+1})\in P_{i_1}\times\ldots\times P_{i_{d+1}}$ attain the same orientation $\chi(p_1,\ldots,p_{d+1})\in \{+,-\}$.
 \end{corollary}

 As was mentioned by Mirzaei and Suk (and suggested by Jacob Fox), any fractional statement of this type instantly implies a comparably efficient same-result, by the means of the following general graph-theoretic property.

\begin{theorem}[de Caen Theorem \cite{DeCaen}]
	Let $n\geq k\geq l$, and let $(V,E)$ be an $l$-uniform hypergraph on $n$ vertices without a copy of $K^l_k$ -- the complete $l$-uniform hypergraph on $k$ vertices. Then we have that
	$$
	{n\choose l}-|E|\geq \frac{n-k+1}{n-l+1}\cdot \frac{{n\choose l}}{{k-1\choose l-1}}.
	$$
\end{theorem}

Indeed, consider the $(d+1)$-uniform hypergraph whose vertices $r'=\Theta(r)$ vertices correspond to the sets $P_i$ in Corollary \ref{Corol:RegularityOrderType}, and that describes all the homogeneous (i.e., separated) $(d+1)$-tuples $\{P_{i_1},\ldots,P_{i_{d+1}}\}$.
Since the number of the ``missing" hyperedges is $\displaystyle O(r^{d+1-1/d})$, this hypergraph must encompass a copy of $K^{d+1}_k$, with $k=\Omega\left(r^{1/d^2}\right)$. In other words, the (symmetric) same-type lemma holds with $c_d(k)=\Omega\left(k^{-d^2}\right)$. 

\begin{corollary}
Let $k\geq d+1$ be a positive integer, and $P$ be a finite set of at least $k$ points in general position in $\reals^d$. Then there exist pairwise disjoint subsets $Q_1,\ldots,Q_k\subset P$, each of cardinality $
|Q_i|=\Omega\left(k^{-d^2}|P|\right)$, so that all the sequences $(p_1,\ldots,p_k)\in Q_1\times\ldots\times Q_k$ attain the same order type $\nu(p_1,\ldots,p_k)$.\footnote{The author recently became aware of a similar lower estimate $
|Q_i|=\Omega\left(k^{-d^2}|P_i|\right)$, which was obtained by Bukh and Vasileuski \cite{BukhVas} in the $k$-partite setting of Theorem \ref{Theorem:SameType} using the polynomial partition of Guth and Katz \cite{GuthKatz}.}
 Moreover, such subsets $Q_i$ can be found via the standard simplicial $r$-partition $\Pi(P,r)$ of Matou\v{s}ek, with $r=\Theta\left(k^{d^2}\right)$. The implicit constants of proportionality depend on $d$.
\end{corollary}

\subsection{Open problems} Let us wrap-up the discussion with a few open questions.

\begin{enumerate}
	\item It remains a major challenge is to devise a more explicit (and, hopefully, also more efficient) proof of the key Theorem \ref{Theorem:Loose}, which would not resort to any topological, real-algebraic or other advanced machinery. Though our proof of Theorem \ref{Theorem:Loose} no longer uses the Colored Tverberg Theorem, it still relies on the colored selection result of Karasev \cite{Karasev}, which makes a nominal use of topology. 
	    Furthermore, the proof of Theorem \ref{Theorem:NewTuran} makes a rudimentary use of the polynomial partition of Guth and Katz \cite{GuthKatz}, in the form of multi-level partition of Theorem \ref{Theorem:MultiLevel}. 
	
	\item The selection exponent $\beta_d=d^5+O(d^4)$ in Theorem \ref{Thm:MainMain}, which stems from our proof of Theorem \ref{Theorem:Loose}, is primarily an artifact of applying our general semi-algebraic Tur\'an-type result -- Theorem \ref{Theorem:NewTuran} -- in the ambient space $\reals^{d^2}$, which we use to represent $(d-1)$-dimensional simplices in $\reals^d$. 
	Can the estimates in Theorem \ref{Theorem:NewTuran} be significantly improved? Any progress in this direction would yield an immediate improvement in the selection exponent $\beta_d$.
	
\item There are remarkably few known upper bounds on the quantity $F_d(n,\eps)$. Specializing to triangles in the plane, Eppstein \cite{EppsteinSelection} proposed a construction which yields $F_2(n,\eps)=O\left(\max\{\eps^2n^3,\eps n^2\}\right)$. Is there a simple upper bound of the form $F_d(n,\eps)=O\left(\eps^{d}n^{d+1}\right)$ for $\eps\geq 1/n$?\end{enumerate}

\subsection{Acknowledgement} The author thanks the anonymous SODA referees for helpful suggestions which helped to improve the presentation. In addition, the author thanks Gil Kalai for insightful discussions, and he especially thanks Shakhar Smorodinsky for pointing out the connection between Tur\'an-type problems and Mnets \cite{Dutta,ZaraNets,MustafaRay}, which proved instrumental to considerably shortening the proof of Lemma \ref{Lemma:BipartiteRegularity}.

 

\appendix
\section{Selection via Tverberg-type theorems}\label{Appendix:Tverberg}
All the known and truly elementary proofs of the First Selection Theorem (the case of $\eps=1$) make use of the Tverberg's Theorem, which implies that any set $A$ of $t(d+1)$ points in $\reals^d$ can be subdivided into $t$ pairwise disjoint subsets $A_1,\ldots,A_{t}$, each of cardinality $d+1$ and so that $\bigcap_{i=1}^{t}\conv(A_i)\neq \emptyset$. Choosing $t=d+1$ and repeating this for ${n\choose (d+1)^2}$ choices of $A\in {P\choose t}$ yields a total of $\Omega_d\left(n^{d+1}\right)$ vertex disjoint $(d+1)$-tuples $(A_1,\ldots,A_{d+1})$ with non-empty intersections $\bigcap_{i=1}^{d+1}\conv(A_i)$. Thus, the Fractional Helly Theorem \cite{FracHelly} yields a point that pierces a fixed fraction of the $d$-simplices within ${P\choose d+1}$.

However, Tverberg's Theorem is of little use unless the hypergraph $(P,E)$ satisfies $|E|/{n\choose d+1}=1-o_d(1)$: given that $n$ is a multiple of $d+1$, the complete $(d+1)$-partite hypergraph $K_{\frac{n}{d+1},\ldots,\frac{n}{d+1}}$ is dense yet it contains no {\it complete sub-hypergraphs} with more than $d+1$ vertices.
Instead, the proof of Theorem \ref{Theorem:Main} by Alon, B\'{a}r\'{a}ny, F\"{u}redi and Kleitman \cite{AlonSelections} is based on the following $(d+1)$-partite extension of Tverberg's Theorem.

\begin{theorem}[The Colored Tverberg Theorem \cite{ColoredTverberg}]\label{Theorem:ColorTverberg}For any integers $d,r\geq 2$ there is a number $t=t(r,d)$, so that the following statement holds:

Let $P_1,\ldots,P_{d+1}$ be point sets in $\reals^d$ of cardinality $t(r,d)$ each. Then there exist $r$ pairwise disjoint subsets $A_1,\ldots,A_{r}$, with $A_i\cap P_j$ for all $1\leq i\leq r$ and $1\leq j\leq d+1$, and so that $\bigcap_{i=1}^{r}\conv\left(A_i\right)\neq \emptyset$.  
\end{theorem}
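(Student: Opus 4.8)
\medskip
\noindent\textbf{Proof proposal.}
The plan is to prove the theorem by the topological \emph{configuration space / test map} method, in the form introduced by B\'ar\'any, Shlosman and Sz\H{u}cs for the ordinary Tverberg theorem and adapted to the colored setting by \v{Z}ivaljevi\'c and Vre\'cica \cite{ColoredTverberg}. A first reduction lets us assume $r=p$ is \emph{prime}: if $p\geq r$ is prime and the statement holds for $p$ with parameter $t(p,d)$, then taking $t(r,d):=t(p,d)$, applying the prime case, and discarding $p-r$ of the resulting rainbow tuples proves the statement for $r$, since $\bigcap_{i=1}^{r}\conv(A_i)\supseteq\bigcap_{i=1}^{p}\conv(A_i)\neq\emptyset$. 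From now on $r=p$ is prime, and we aim to prove the theorem with $t=t(p,d):=2p-1$ (any larger value works as well).

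The configuration space is built as follows. For each color class $P_j$, $0\leq j\leq d$, of size $2p-1$, let $\Delta^{(j)}$ be the chessboard complex on vertex set $[p]\times P_j$ whose faces are the partial matchings (no two chosen vertices sharing a first or a second coordinate); a point of $\Delta^{(j)}$ records a way of assigning some points of $P_j$ to the $p$ ``parts'' so that each part receives at most one point of $P_j$, together with barycentric weights. Set $C:=\Delta^{(0)}*\Delta^{(1)}*\cdots*\Delta^{(d)}$, equipped with the diagonal $\mathbb{Z}/p$-action that cyclically shifts the part index. Two facts about $C$ are needed. First, the action is \emph{free}: if a nonzero group element fixed a simplex of $C$ setwise, it would fix it in each join factor, forcing some partial matching to contain an entire $\mathbb{Z}/p$-orbit of vertices in a single column --- impossible in a matching --- so only the empty simplex is fixed. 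Second, $C$ is \emph{highly connected}: by the Bj\"orner--Lov\'asz--Vre\'cica--\v{Z}ivaljevi\'c bound the chessboard complex $\Delta_{p,2p-1}$ is $(p-2)$-connected, and the join inequality $\mathrm{conn}(X*Y)\geq \mathrm{conn}(X)+\mathrm{conn}(Y)+2$, iterated over the $d+1$ factors, gives that $C$ is $\bigl((d+1)(p-2)+2d\bigr)$-connected, i.e.\ $\bigl((d+1)p-2\bigr)$-connected.

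Now suppose, for contradiction, that no family $A_1,\dots,A_p$ of pairwise disjoint rainbow tuples with $\bigcap_{i=1}^p\conv(A_i)\neq\emptyset$ exists. To a point of $C$ --- consisting of join weights $\alpha_0,\dots,\alpha_d\geq 0$ summing to $1$ and, within the $j$-th factor, barycentric weights supported on a partial matching --- associate, for each part $i\in[p]$, the pair $(v_i,w_i)$, where $w_i\geq 0$ is the total weight routed to part $i$ (the $w_i$ sum to $1$) and $v_i\in\reals^d$ is the corresponding weighted sum of the points of $\bigcup_j P_j$ assigned to part $i$. A point of $C$ with $v_1=\dots=v_p$ \emph{and} $w_1=\dots=w_p=1/p$ would exhibit the forbidden configuration: each $w_i>0$ makes every part nonempty, the matching structure keeps every $A_i$ rainbow, and the common value $p\,v_i$ lies in every $\conv(A_i)$. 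Standard deleted-join bookkeeping --- for which we follow \cite{ColoredTverberg} --- then shows that the continuous map $C\to W_p^{\oplus(d+1)}$ obtained by recording $(v_i)_i$ and $(w_i)_i$ modulo constants, where $W_p=\{x\in\reals^p:\sum_i x_i=0\}$ is the standard $(p-1)$-dimensional representation of $\mathbb{Z}/p$, misses the origin. Radial normalization turns it into a $\mathbb{Z}/p$-equivariant map $C\to S\bigl(W_p^{\oplus(d+1)}\bigr)=S^{(d+1)(p-1)-1}$; since $p$ is prime, $W_p^{\oplus(d+1)}$ has no nonzero $\mathbb{Z}/p$-fixed vector, so this sphere carries a free $\mathbb{Z}/p$-action.

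This contradicts Dold's theorem: there is no $\mathbb{Z}/p$-equivariant map from an $n$-connected free $\mathbb{Z}/p$-complex to a free $\mathbb{Z}/p$-space of dimension at most $n$. Here $C$ is $\bigl((d+1)p-2\bigr)$-connected while the target sphere has dimension $(d+1)(p-1)-1\leq (d+1)p-2$ for all $d\geq 1$, so no such map can exist, contradicting the one just built. Hence a colored Tverberg partition into $p\geq r$ parts exists, which proves the theorem. The main obstacle I foresee is precisely the test-map step: defining the $(v_i,w_i)$-map continuously over all of $C$, including faces where some part is empty or several $\alpha_j$ vanish, and checking that the non-existence of a colored Tverberg partition genuinely forces this map to avoid the origin (and, if the statement is to be read with $|A_i\cap P_j|=1$, that the hypothetical zero yields a \emph{full} rainbow tuple) --- the familiar but error-prone heart of the deleted-join framework --- together with the appeal to the Bj\"orner--Lov\'asz--Vre\'cica--\v{Z}ivaljevi\'c connectivity estimate for chessboard complexes.
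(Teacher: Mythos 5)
The paper does not actually prove this statement: it is imported verbatim as a known result of \v{Z}ivaljevi\'c and Vre\'cica \cite{ColoredTverberg}, with only the remark that their proof gives $t(r,d)\leq 2p(r)-1$ for $p(r)$ the smallest prime $\geq r$. Your sketch is precisely that original configuration-space/test-map argument --- reduction to prime $r=p$, the join of chessboard complexes $\Delta_{p,2p-1}$, freeness of the $\mathbb{Z}/p$-action, the $(p-2)$-connectivity bound, the map to $W_p^{\oplus(d+1)}$ and Dold's theorem --- and your bookkeeping is correct: the source is $\bigl((d+1)p-2\bigr)$-connected while the free target sphere has dimension $(d+1)(p-1)-1$, so the contradiction goes through. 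The only point you leave open, that a zero of the test map yields parts meeting each color class \emph{at most} once rather than exactly once, is a one-line patch: if $k$ of the $p$ parts miss color $j$, then only $p-k$ of the $2p-1$ points of $P_j$ are used, leaving $p-1+k\geq k$ unused points of that color with which to top up the deficient parts while preserving disjointness, and enlarging each $A_i$ only enlarges $\conv(A_i)$, so the common point survives.
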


In other words, any complete geometric $(d+1)$-uniform hypergraph in $\reals^d$ with sides $P_i$ of cardinality $t=t(r,d)$ must contain $r$ vertex disjoint $d$-simplices that admit a common intersection.
The original proof of \v{Z}ivaljevi\'c and Vre\'cica yields $t(r,d)\leq 2p(r)-1$, where $p(r)$ is the smallest prime that is not smaller than $r$. Using the trivial bound $p(r)\leq 2r-1$ shows that $t(r,d)\leq 4r-3$. In the sequel, let $t=t_d$ denote the smallest possible number $t(d+1,d)$, which satisfies $t_d\leq 4d-3$.

To obtain many copies of $K_{t,\ldots,t}$ and, thereby, many intersecting $(d+1)$-tuples of vertex disjoint $d$-simplices within $E$ (that can be ``plugged" into the Fractional Helly's Theorem), Alon, B\'{a}r\'{a}ny, F\"{u}redi and Kleitman invoked the following general graph-theoretic result of Erd\H{o}s and Simonovits.

\begin{theorem}[\cite{ErdosSimon}]\label{Theorem:ErdSim}
For all positive integers $d$ and $t$ there is a positive constant $b=b(d,t)$ such shat the following statement holds.

Every $(d+1)$-uniform hypergraph $(P,E)$ on $|V|=n$ vertices and $|E|=\eps{n\choose d+1}$ edges, and with $n^{-t^{-d}}\ll \eps\leq 1$, contains at least 
$b\eps^{t^{d+1}}n^{(d+1)t}$ copies of $K_{t,\ldots,t}$.
\end{theorem}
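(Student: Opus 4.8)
This is the supersaturated form of Erd\H{o}s's ``box theorem'', and I would prove it by the classical iterated-convexity counting argument, keeping careful track of the powers of $\eps$ and $n$. Write $r:=d+1$, and let $N$ be the number of ordered tuples $(A_1,\ldots,A_r)$ of pairwise disjoint $t$-element subsets of $P$ such that every transversal $\{a_1,\ldots,a_r\}$ (with $a_i\in A_i$) is an edge of $E$. Every copy of $K_{t,\ldots,t}$ arises from exactly $r!$ such tuples (permute the $r$ parts; the part-structure is recoverable from the subhypergraph), so it suffices to prove $N\geq b'\eps^{t^{r}}n^{rt}$ for a suitable $b'=b'(d,t)>0$. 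The case $t=1$ is trivial (then $N=r!\,|E|$), so assume $t\geq 2$; and we may assume $n\geq(d+1)t$, since otherwise no copy exists but, reading the hypothesis $n^{-t^{-d}}\ll\eps\leq 1$ as ``$\eps\geq c_0 n^{-t^{-d}}$'' for a constant $c_0=c_0(d,t)$ chosen large below, the hypothesis is then unsatisfiable.

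For $0\leq i\leq r$ let $T_i$ count the tuples $(A_1,\ldots,A_i,a_{i+1},\ldots,a_r)$ in which $A_1,\ldots,A_i$ are pairwise disjoint $t$-subsets of $P$, the $a_{i+1},\ldots,a_r$ are distinct vertices outside $A_1\cup\cdots\cup A_i$, and $\{x_1,\ldots,x_i,a_{i+1},\ldots,a_r\}\in E$ for every choice $x_j\in A_j$; thus $T_0=r!\,|E|\geq 2^{-r}\eps\,n^{r}$ (since $n\geq rt\geq 2r$) and $T_r=N$. Let $M_{i-1}$ count the ``shapes'' $(A_1,\ldots,A_{i-1},a_{i+1},\ldots,a_r)$ ($i-1$ disjoint $t$-sets and $r-i$ further distinct vertices), so $M_{i-1}\leq n^{m_{i-1}}$ with $m_{i-1}=(i-1)(t-1)+d$. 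The heart of the proof is the recursion $T_i\ \geq\ (4^t t!)^{-1}\,T_{i-1}^{\,t}\,M_{i-1}^{-(t-1)}$ for $1\leq i\leq r$, valid whenever $T_{i-1}/M_{i-1}\geq 2(d+2)t$. To see it, fix a shape $\mathbf c$ and let $\ell(\mathbf c)$ be the number of vertices $v$ with $\{x_1,\ldots,x_{i-1},v,a_{i+1},\ldots,a_r\}\in E$ for all $x_j\in A_j$; any $t$-subset of these $\ell(\mathbf c)$ vertices avoiding the at most $rt$ already-used vertices is a legal $A_i$, so $T_i\geq\sum_{\mathbf c}g\bigl(\ell(\mathbf c)-rt\bigr)$ where $g(x)=\binom{x}{t}$, extended by $g(x)=0$ for $x\leq t-1$, is convex on $\reals$. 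Since a pair (shape, link-vertex) corresponds bijectively to a level-$(i-1)$ tuple, $\sum_{\mathbf c}\ell(\mathbf c)=T_{i-1}$; as there are $M_{i-1}$ shapes in all, Jensen's inequality gives $T_i\geq M_{i-1}\,g\bigl(T_{i-1}/M_{i-1}-rt\bigr)$, and the side condition converts this into the claimed recursion.

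Iterating from $T_0\geq 2^{-r}\eps n^{r}$ gives, by induction, $T_i\geq c_i\,\eps^{t^{i}}n^{q_i}$ with $c_i=c_i(d,t)>0$, where $q_0=r$ and $q_i=t\,q_{i-1}-(t-1)m_{i-1}$; one then checks the closed forms $q_i=it+d+1-i$ and $q_{i-1}-m_{i-1}=1$. The first gives $q_r=(d+1)t$, the right number of vertices of $K_{t,\ldots,t}$; the second gives $T_{i-1}/M_{i-1}\geq c_{i-1}\,\eps^{t^{i-1}}n\geq c_{i-1}\,\eps^{t^{d}}n\geq c_{i-1}c_0^{\,t^{d}}$ (using $\eps\leq1$ and $\eps\geq c_0 n^{-t^{-d}}$), so a large enough $c_0$ (depending only on $d,t$) makes the side condition $T_{i-1}/M_{i-1}\geq 2(d+2)t$ hold at every step $i=1,\ldots,r$ and the induction closes. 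Taking $i=r$ yields $N=T_r\geq c_r\,\eps^{t^{d+1}}n^{(d+1)t}$, and dividing by $r!$ proves the theorem with $b=c_r/r!$.

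The one delicate point is the bookkeeping of exponents: the identities $q_i=it+d+1-i$ and $q_{i-1}-m_{i-1}=1$ must be verified exactly, since they are what makes $q_r$ equal the vertex count of $K_{t,\ldots,t}$ and, crucially, what makes the side condition at the \emph{final} step $i=r$ amount to precisely the hypothesis $\eps\gg n^{-t^{-d}}$ (for the earlier steps the condition is strictly weaker). The ``already-used vertices'' and ``short link'' error terms should be absorbed through the convex extension $g$ rather than treated case by case. Beyond this, no idea is needed that is not already in the Erd\H{o}s--Simonovits scheme.
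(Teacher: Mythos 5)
The paper does not prove this statement at all: Theorem~\ref{Theorem:ErdSim} is quoted verbatim from Erd\H{o}s--Simonovits \cite{ErdosSimon} as a black box in the appendix, so there is no internal proof to compare against. Your argument is a correct, self-contained reconstruction of the standard supersaturation proof: peel off one part at a time, bound the number of valid $t$-sets extending a fixed ``shape'' by the convex function $g(x)=\binom{x}{t}$ (extended by $0$ below $t-1$), and apply Jensen over shapes. I checked the bookkeeping you flag as delicate: with $m_{i-1}=(i-1)(t-1)+d$ and $q_i=tq_{i-1}-(t-1)m_{i-1}$, the closed forms $q_i=it+d+1-i$ and $q_{i-1}-m_{i-1}=1$ do hold, so $q_{d+1}=(d+1)t$ and the side condition $T_{i-1}/M_{i-1}\geq c_{i-1}\eps^{t^{i-1}}n$ is indeed guaranteed by $\eps\geq c_0 n^{-t^{-d}}$ at every step, with the final step tight. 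The remaining small points are also sound: the factor-$r!$ correspondence between ordered tuples and copies of $K_{t,\ldots,t}$ is legitimate since the part structure is recoverable from co-occurrence (and any overcounting would only be absorbed into $b$ anyway), and replacing the true number of shapes by the upper bound $n^{m_{i-1}}$ is harmless because it enters with a negative exponent. So your proposal supplies a proof the paper deliberately omits, and it is the same argument as in the cited source rather than a new route.
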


Unfortunately, Theorem \ref{Theorem:ErdSim} can yield only $\Omega\left(\eps^{t^{d+1}}n^{d+1}\right)$ $(d+1)$-size sets of edges $f_1,\ldots,f_{d+1}\in E$ with a non-empty intersection 
$\bigcap_{i=1}^{d+1}\conv(f_i)$. Hence, regardless of the choice of the Colored Tverberg constant $t=\Theta(d)$, the ``bottom-up" argument cannot possibly be used to pierce more than $\eps^{(cd)^{d+1}} n^{d+1}$ simplices in $E$.


\section{Elementary properties of the hyperplane transversals to simplices}\label{App:ElementaryRubin}


In this section we establish several of the elementary properties of hyperplane transversals to families of at most $d+1$ simplices.

\bigskip
\noindent{\bf Proof of Lemma \ref{Lemma:PinHyperplane}.} Let $H$ be any hyperplane that crosses $\Delta_1,\ldots,\Delta_{d+1}$. We continuously translate $H$ in a fixed direction until the first time $H$ meets a vertex $v_1$ of a simplex $\Delta_{i_1}\in \Sigma$. We continuously rotate $H$ in a fixed direction around $v_1$, so the normal to $H$ is moving along a fixed great circle of ${\mathbb{S}}^{d-1}$, until the first time $H$ hits an additional vertex $v_2$ of a simplex $\Delta_{i_2}$ (where $i_1$ is not necessarily different from $i_2$). We repeat this procedure $d-1$ times, each time adding a vertex $v_j$ of some simplex $\Delta_{i_j}\in \Sigma$, for $2\leq j\leq d$, to the hyperplane $H$ which we rotate around the affine hull of the previously added vertices $v_1,\ldots,v_{j-1}$. (This is possible because $|\bigcup_{i=1}^{d+1}V(\Delta_i)|\geq d+1$, whilst $H$ never contains more than $d$ of these vertices.) Once $H$ encounters a new vertex $v_{i_j}$, this vertex never leaves $H$, which reduces by $1$ the available degrees of freedom that are available to the following rotation step. The process terminates when $H$ contains $d$ vertices, so its rotation is no longer possible. 

Notice that, at every step, $H$ meets (the closure of) every simplex $\Delta_{j}$, for the contact between $H$ and $\Delta_j$ cannot be lost before $H$ ``gains" at least one of the $d+1$ vertices of $\Delta_j$. $\Box$

\bigskip
\noindent {\bf Another proof of Lemma \ref{Claim:HyperplaneThroughPoint}.} Let us consider the family
$$
\C=\{C\left(\K\setminus \{\Delta_{i}\}\right)\mid 1\leq i\leq d+1\}.
$$

\noindent 
According to the Nerve Theorem (see, e.g., \cite[Theorem 4.4]{BorsukUlam}), the union $\bigcup \C$ is homeomorphic to the nerve complex $\N(\C)=\{\C'\subseteq \C\mid \bigcap \C'\neq \emptyset\}$. 
Furthermore, since $\K$ is loose, we have that $\bigcap \C=\bigcap_{1\leq i\leq d+1} C\left(\K\setminus \{\Delta_{i}\}\right)\neq \emptyset$. It, therefore, follows that the nerve complex $\N(\C)$ is complete and isomorphic to the $d$-dimensional simplex; hence, the union $\bigcup \C$ must be contractible. 
Since the vertex set $V(\K)$ is in general position, the boundary of $C(\C)=C(\K)$ is comprised of closed $(d-1)$-dimensional simplices, and each of these simplices is also a boundary simplex of some set in $\C$. We, therefore, conclude that $x$ belongs to the union $\bigcup \C=C(\C)=C(\K)$.

Fix any $1\leq i\leq d+1$ so that $x\in C\left(\K\setminus \{\Delta_{i}\}\right)$. It remains to show that the set family, of cardinality $d+1$,
$$
\K'=\{\{x\}\}\cup (\K\setminus \{\Delta_i\})
$$
\noindent is crossed (i.e., {\it not} separated). Indeed, suppose for a contradiction that $\K'$ is separated.
Then Lemma \ref{Lemma:SeparatingHyperplane} yields a hyperplane $H'$ that is tangent to each set in  $\K\setminus \{\Delta_i\}$ while separating this $d$-size family from $\{x\}$, so that $x$ cannot lie in the convex hull $\conv\left(\K\setminus \{\Delta_i\}\right)$. $\Box$

\end{document}